\documentclass[12pt]{amsart}

\title{Parabolic conjugacy class in finite reductive groups and its additive analogue}

\author{GyeongHyeon Nam}
\address{ Department of Mathematics and Systems Analysis, Aalto University, Espoo 02150, Finland}
\email{\href{mailto:gyeonghyeon.nam@aalto.fi}{gyeonghyeon.nam@aalto.fi}}

\usepackage{fullpage} 
 \usepackage[alphabetic]{amsrefs}
\usepackage{amssymb}
\usepackage{amsmath}
\usepackage{mathtools} 
\usepackage{xcolor}
\usepackage{colonequals}
\usepackage{amsrefs}
\usepackage[linktoc=all]{hyperref}
\usepackage[overload]{textcase}
\usepackage{textgreek}
\usepackage{tikz-cd}
\usepackage{mleftright}

\makeatletter
\newcommand{\vo}{\vec{o}\@ifnextchar{^}{\,}{}}
\makeatother

\theoremstyle{plain}
\newtheorem{thm}{Theorem}
\newtheorem{lem}[thm]{Lemma}
\newtheorem{prop}[thm]{Proposition}

\newtheorem{cor}[thm]{Corollary}
\theoremstyle{definition}

\theoremstyle{remark}
\newtheorem{rem}[thm]{Remark}

\newtheorem{con}[thm]{Convention}

\definecolor{red}{rgb}{1,0,0}
\definecolor{orange}{rgb}{1,0.5,0}
\definecolor{purple}{rgb}{.5,.2,.8}
\definecolor{blue}{rgb}{.2,.2,.8}
\definecolor{green}{rgb}{.4,.6,.4}

\def\bes{\begin{equation*}}  \def\ees{\end{equation*}} 
\def\bi{\begin{itemize}}   \def\ei{\end{itemize}}
\def\ba{\begin{eqnarray}} \def\ea{\end{eqnarray}}    
\def\bl{\begin{align}}    \def\el{\end{align}}       
\def\bls{\begin{align*}}    \def\els{\end{align*}}

\newcommand{\Fq}{\mathbb{F}_q}

\newcommand{\fg}{\mathfrak{g}}

\newcommand{\fp}{\mathfrak{p}}
\newcommand{\fl}{\mathfrak{l}}
\newcommand{\ft}{\mathfrak{t}}

\newcommand{\fn}{\mathfrak{n}}

\newcommand{\Ad}{{\mathrm{Ad}}}

\begin{document}

\maketitle

\begin{abstract}
%In this paper, we answer the question posed in the paper of Goodwin and R\"ohrle for reductive groups and their parabolic subgroups. 
In this paper, we answer the question posed by Goodwin and R\"ohrle for reductive groups and their parabolic subgroups.
In addition, we consider an additive analogue of this problem. By studying this additive analogue, we identify similar properties between the Deligne-Lusztig character of a finite reductive group and the Harish-Chandra induction over the corresponding finite Lie algebra.
\end{abstract}

\tableofcontents
 
\section{Introduction}
Let $G$ be a connected (untwisted) split reductive group with simply connected derived subgroup over $k=\overline{\Fq}$ and a fixed maximal split torus $T$. Let $F$ be a Frobenius map and assume that the characteristic of $k$ is very good for $G$. We denote its Lie algebra as $\fg$. For a $F$-stable parabolic subgroup $P$, we have the decomposition $P=U \rtimes L$ for its Levi subgroup $L$ and unipotent radical $U$.  For any elements $g\in G$ and $x\in \fg$, we have the Jordan decomposition $g=g_sg_u=g_ug_s$ and $x=x_s+x_n$, where semisimple elements $g_s\in G$ and $x_s\in \fg$, a unipotent element $g_u\in G$ and a nilpotent element $x_n\in \fg$. 

In this paper, we answer the question in \cite[Remark 4.14]{goodwin2009rational} over parabolic subgroups of $G$. Furthermore, we give an additive analogue of the result of \cite{goodwin2009rational} and its parabolic version. To consider the additive analogue, we used the Harish-Chandra induction on $\fg^F$, and this is an additive version of the Deligne-Lusztig character of $G^F$ from the work of Letellier \cite{letellier2004fourier}. In the additive analogue, we also observe that the Deligne-Lusztig character and the Harish-Chandra induction share common properties, for example, we decompose the trivial character or regular character in a similar way, cf. Equation   \eqref{eq:decompose-trivial} and \eqref{eq:step2}.

The problem in \cite[Remark 4.14]{goodwin2009rational} is to count the number of $P^F$-conjugacy classes in $G^F$ for a $F$-stable parabolic subgroup $P$, which is denoted by $k(P^F,G^F)$. 
We consider this problem when the characteristic $p$ is very good for any pseudo-Levi subgroup $L$ (centraliser subgroups of semisimple elements in $G$) of $G$, and $q$ is sufficiently large. Note that we can assume that the characteristic $p$ is  very good for any pseudo-Levi subgroup $L$ of $G$ since there are only finitely many pseudo-Levi subsystems in the root datum of $G$. For further detailed discussion about pseudo-Levi subgroups, please see \cite[\S2.1]{KNWG}.
Then we have the following answer to this question.
 
  \begin{thm}(Theorem \ref{thm:result-group})
 We have \begin{equation}\label{eq:intro-main1}
k(P^F,G^F)=\sum_{\substack{[\xi,u]\in \Xi_L(G) }} \frac{|\tau_G^{-1}([\xi,u])|}{|W_L|}\sum_{w\in W_L}\delta_{\xi}^w Q_{T_w}^\xi(u) |W_G(T_w)^F/W_{\xi}(T_w)^F|,\end{equation}
where $\xi$ is a pseudo-Levi subgroup of $G$, $W_\xi(T)$ the Weyl group of $\xi$ over a maximal torus $T$, $T_w$ a $w$-twisted torus over a split maximal torus $T$, and  $$\delta_{\xi}^w=\begin{cases}1 \quad &\text{if } g_s\in T_w\ (\text{up to }G^F\text{-conjugation}) \text{ for some }g \in \tau_G^{-1}([\xi,u])\\
0 &\text{otherwise}.
\end{cases}$$
\newline
Furthermore, this satisfies polynomial count on residue class (PORC) property.
 \end{thm}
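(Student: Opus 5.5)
The count goes through Burnside's lemma and then character theory. Since $P^F$ acts on $G^F$ by conjugation, Burnside's lemma gives
\begin{equation*}
k(P^F,G^F)=\frac{1}{|P^F|}\sum_{x\in P^F}|C_{G^F}(x)| .
\end{equation*}
We would first trade the sum over $P^F$ for a sum over $G^F$. Counting pairs $(x,g)$ with $x\in P^F$ and $g\in G^F$ commuting, and summing instead over $g$, gives
\begin{equation*}
k(P^F,G^F)=\frac{1}{|P^F|}\sum_{g\in G^F}\bigl|\{x\in P^F : xg=gx\}\bigr|=\frac{1}{|P^F|}\sum_{g\in G^F}|C_{G^F}(g)\cap P^F|.
\end{equation*}
The inner quantity is governed by the permutation character $\mathrm{Ind}_{P^F}^{G^F}1$ restricted to centralisers. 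It is cleaner to write
\begin{equation*}
k(P^F,G^F)=\sum_{[g]}\bigl\langle 1,\mathrm{Res}^{G^F}_{C_{G^F}(g)}\mathrm{Ind}_{P^F}^{G^F}1\bigr\rangle_{C_{G^F}(g)},
\end{equation*}
which amounts to counting, for each $G^F$-class $[g]$, the number of $C_{G^F}(g)$-orbits on the fixed points of $g$ on $G^F/P^F$. Here we use the standard identity $\mathrm{Ind}_{P^F}^{G^F}1=R_L^G(1)$ (Harish-Chandra induction). Since $L$ is split, $1_{L^F}$ further decomposes as
\begin{equation*}
1_{L^F}=\frac{1}{|W_L|}\sum_{w\in W_L}R_{T_w}^L(1).
\end{equation*}
This is the decomposition of the trivial character in \eqref{eq:decompose-trivial}. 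By transitivity we get
\begin{equation*}
\mathrm{Ind}_{P^F}^{G^F}1=\frac{1}{|W_L|}\sum_{w\in W_L}R_{T_w}^G(1).
\end{equation*}

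Next we evaluate $R_{T_w}^G(1)$ at $g=g_sg_u$ by the Deligne--Lusztig character formula. The value is a sum over $G^F$-conjugates of $g_s$ lying in $T_w$, weighted by Green functions $Q^{C_G(g_s)^\circ}_{hT_wh^{-1}}(g_u)$. Here we group $G^F$ into the pieces $\tau_G^{-1}([\xi,u])$, indexed by $\Xi_L(G)$, the data of the pseudo-Levi $\xi=C_G(g_s)$ (simply connected derived subgroup makes it connected) and the unipotent class $u$ in it; we assume the paper's earlier definitions of $\tau_G$ and $\Xi_L(G)$. On each piece the summand is constant. It vanishes unless $g_s$ is $G^F$-conjugate into $T_w$, which is the role of $\delta_\xi^w$. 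When it is nonzero, the number of relevant $G^F$-conjugates modulo $\xi^F$ is $|W_G(T_w)^F/W_\xi(T_w)^F|$, and the Green function is $Q_{T_w}^\xi(u)$. Because $C_G(g)$ here is large, it is better to work with the Burnside sum directly, $\frac{1}{|G^F|}\sum_{g}|C_{G^F}(g)|\,\mathrm{Ind}_{P^F}^{G^F}1(g)$. The factor $|C_{G^F}(g)|$ times the class size gives $|G^F|$, so each class contributes exactly the character value. Summing over classes in a piece then yields the factor $|\tau_G^{-1}([\xi,u])|$ in \eqref{eq:intro-main1}. I should double check the normalisation here: if the summation is over classes rather than elements, $|\tau_G^{-1}([\xi,u])|$ should be read as the number of classes in the fibre. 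I would match this with the definition of $\tau_G$ earlier in the paper.

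For PORC, we use that $p$ is very good for every pseudo-Levi and that $q$ is large. Then Green functions $Q_{T_w}^\xi(u)$ are polynomials in $q$ (Lusztig, Shoji) as $u$ ranges over classes of a fixed type. The fibre cardinalities $|\tau_G^{-1}([\xi,u])|$ are PORC: counting semisimple classes with a given centraliser type is a polynomial on residue classes of $q$ modulo the torsion of $\pi_1$-type data. The Weyl-group quotients and $\delta_\xi^w$ depend only on $q$ modulo a fixed integer. A finite sum of such terms is PORC.

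The main obstacle is the bookkeeping in the middle step. It must be checked that, for $g$ in a fixed fibre $\tau_G^{-1}([\xi,u])$, the Deligne--Lusztig sum collapses uniformly to $\delta_\xi^w Q_{T_w}^\xi(u)\,|W_G(T_w)^F/W_\xi(T_w)^F|$. In particular, one has to control how many $\xi^F$-classes of $F$-stable tori $hT_wh^{-1}\subset\xi$ arise and show that they all have Green function equal to $Q_{T_w}^\xi$. I would handle this by identifying the set $\{h\in G^F: h^{-1}g_sh\in T_w\}/\xi^F$ with $W_G(T_w)^F/W_\xi(T_w)^F$ via the normaliser of $T_w$.
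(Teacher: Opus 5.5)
Your reduction matches the paper's. You get $k(P^F,G^F)=\sum_{g}\mathrm{Ind}_{P^F}^{G^F}1(g)$ over $G^F$-class representatives (Corollary~\ref{coro:k}; the paper cites Goodwin--R\"ohrle where you use Burnside). You then write $\mathrm{Ind}_{P^F}^{G^F}1=\frac{1}{|W_L|}\sum_{w\in W_L}R_{T_w}^G(1)$ via $R_L^G$ and transitivity, and group by type. Your reading of $|\tau_G^{-1}([\xi,u])|$ as a number of classes is the one under which the formula makes sense. One minor slip: $\langle 1,\mathrm{Res}\,\mathrm{Ind}_{P^F}^{G^F}1\rangle_{C_{G^F}(g)}$ counts $C_{G^F}(g)$-orbits on all of $G^F/P^F$; orbits on the $g$-fixed points would sum to $k(P^F)$.

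The genuine gap is the step you call bookkeeping, and both of its ingredients fail.

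First, the map $h\mapsto h^{-1}g_sh$ identifies $\xi^F\backslash\{h\in G^F: h^{-1}g_sh\in T_w\}$ with $[g_s]_{G^F}\cap T_w^F$. This set can consist of several $W_G(T_w)^F$-orbits, so it is not $W_G(T_w)^F/W_\xi(T_w)^F$ in general. Take $G=GL_4$, $w=(12)$, $g_s=\mathrm{diag}(a,a,b,b)$ with $a\neq b$ in $\mathbb{F}_q^\times$, and $T_w$ anisotropic on the first $2\times2$ block and diagonal on the second. Then $[g_s]\cap T_w^F=\{\mathrm{diag}(a,a,b,b),\mathrm{diag}(b,b,a,a)\}$ is fixed pointwise by $W_G(T_w)^F\cong C_{S_4}((12))$, while $|W_G(T_w)^F/W_\xi(T_w)^F|=1$. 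Hence $R_{T_w}^G(1)(g_s)=2(1-q^2)$, not $1-q^2$. For $L=GL_2\times GL_1\times GL_1$ the collapsed expression gives $f_{P^F}^{G^F}(g_s)=\frac12\bigl(6(q+1)^2+1-q^2\bigr)$ instead of the true $2(q+1)(q+2)$.

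Second, the summand is not constant on fibres. A type is only a $G$-conjugacy class and forgets the $G^F$-class of $C_G(g_s)$. For $G=GL_3$ and $L=GL_2\times GL_1$, the type $[T,1]$ contains two kinds of classes meeting $P^F$: the split regular semisimple classes, and those with eigenvalues $x,x^q,c$ ($x\in\mathbb{F}_{q^2}\setminus\mathbb{F}_q$, $c\in\mathbb{F}_q^\times$). The true value $f_{P^F}^{G^F}$ is $3$ on the former and $1$ on the latter. Yet $\delta_T^1=\delta_T^{(12)}=1$, so the formula assigns $\frac12(6+2)=4$ to each of them.

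The paper's proof rests on the same two assertions, so it will not close the gap for you. One is the ``for some $g$ iff for every $y$'' claim, justified only by $C_G(s)$ containing every maximal torus through $s$. The other is the coset count behind \eqref{eq:DL-additive-formula} in the proof of Proposition~\ref{prop:f_p^g}.

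What your argument does prove is
\[
k(P^F,G^F)=\sum_{g}\frac{1}{|W_L|}\sum_{w\in W_L}\ \sum_{s'\in[g_s]_{G^F}\cap T_w^F}Q^{C_G(s')}_{T_w}\bigl(h_{s'}^{-1}g_uh_{s'}\bigr),
\]
with $g$ over $G^F$-class representatives and $h_{s'}\in G^F$ any element with $h_{s'}^{-1}g_sh_{s'}=s'$. To regroup this over a $q$-independent index set, your types must record at least the $G^F$-class of $C_G(g_s)$ and the rational class of $g_u$. The inner sum over $[g_s]_{G^F}\cap T_w^F$ must also replace $|W_G(T_w)^F/W_\xi(T_w)^F|\,Q^\xi_{T_w}(u)$.
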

The map $\tau_G$ is a partition map of elements in $G^F$ using the Jordan decomposition and centraliser. For the definition of $\tau_G$, please see \S\ref{s:types}, especially Equation \eqref{eq:tauG}.
\begin{con}
For convenience, we denote the Weyl group over a (fixed) split maximal torus $T$ of $G$ by $W_G$ instead of $W_G(T)$. Note that in this case, $F$-action on $W_G$ is trivial.
\end{con}
 
\bigskip
 In addition, we also consider an additive story of this problem by considering a sum \[
k(\fp^F,G^F)=\frac{1}{|{P^F}|}\sum_{x\in \fp^F} |C_{G^F}(x)|,
\]
where $C_{G^F}(x)=\{g\in G^F\,|\, Ad_g(x)=x\}$ and  $\mathfrak{p}=\mathrm{Lie}(P)$.
We give a reason to explain why this is an additive variant of $k(P^F, G^F)$ in \S\ref{sss:additive-analogue}.
 Then in this case, we have the following result.
   \begin{thm}(Theorem \ref{thm:result-additive})
 We have \begin{equation}\label{eq:intro-main2}
k(\fp^F,G^F)=\sum_{\substack{[\xi,n]\in \Xi_\fl(\fg) }} \frac{|\tau_\fg^{-1}([\xi,n])|}{|W_L|}\sum_{w\in W_L}\delta_{\xi}^w Q_{T_w}^\xi(\omega(u)) |W_G(T_w)^F/W_{\xi}(T_w)^F|,\end{equation}
where   $\omega\,:\, \fg_{nil}\rightarrow G_{uni}$ a $G$-equivariant isomorphism, and $$\delta_{\xi}^w=\begin{cases}1 \quad &\text{if } x_s\in \ft_w\ (\text{up to conjugation}) \text{ for some }x\in \tau_{\fg}^{-1}([\xi,n])\\
0 &\text{otherwise}.
\end{cases}$$
 \end{thm}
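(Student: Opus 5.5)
The plan is to run the argument for the group version, but on the Lie algebra side and with Harish-Chandra induction on $\fg^F$ (Letellier) playing the role of Deligne--Lusztig induction. There are four steps.

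\emph{Step 1: rewrite the count as a character inner product.} Using $C_{G^F}(x)$ and Burnside-type counting,
\[
k(\fp^F,G^F)=\frac{1}{|P^F|}\sum_{x\in\fp^F}|C_{G^F}(x)|=\frac{1}{|G^F|}\sum_{x\in \fg^F}|C_{G^F}(x)|\cdot\#\{gP^F\in G^F/P^F : \Ad_{g^{-1}}x\in\fp\}.
\]
The last factor is the permutation character of $G^F$ on the $\fp$-fixed flags, i.e.\ the Harish-Chandra induction $R_{\fl}^{\fg}(1_{\fl^F})(x)$. Following \cite{letellier2004fourier}, this function satisfies the additive analogue of \eqref{eq:decompose-trivial}:
\[
R_{\fl}^{\fg}(1)=\frac{1}{|W_L|}\sum_{w\in W_L}R_{\ft_w}^{\fg}(1).
\]
This holds because $1_{\fl^F}=\frac{1}{|W_L|}\sum_{w}R_{\ft_w}^{\fl}(1)$ on $\fl^F$, which is the Lie algebra version of the Deligne--Lusztig decomposition of the trivial character, and because induction is transitive.

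\emph{Step 2: character formula.} Next I apply Letellier's character formula for $R_{\ft_w}^{\fg}(1)$ at $x=x_s+x_n$:
\[
R_{\ft_w}^{\fg}(1)(x)=\frac{1}{|C_G(x_s)^{\circ F}|}\sum_{\substack{g\in G^F\\ \Ad_{g^{-1}}x_s\in\ft_w}}Q^{C_G(x_s)}_{gT_wg^{-1}}(\omega(x_n)).
\]
Here the Green functions of $C_G(x_s)$, a pseudo-Levi (indeed a Levi, since $x_s$ is semisimple in $\fg$), are transported to the nilpotent cone via the Springer isomorphism $\omega$. The assumption that $p$ is very good for every pseudo-Levi is what guarantees that $\omega$ exists $G$-equivariantly on each centraliser.

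\emph{Step 3: group by type.} I group $x\in\fg^F$ by $\tau_\fg(x)=[\xi,n]$. On a fibre, $|C_{G^F}(x)|$ and the value above are constant. The sum over $g$ equals $\delta^w_\xi\,|C_G(x_s)^F|\,|W_G(T_w)^F/W_\xi(T_w)^F|$, since it counts the $G^F$-conjugates of $T_w$ lying in $\xi$ modulo $\xi^F$-conjugacy. Combining $|C_{G^F}(x)|$ with the normalisation $1/|G^F|$ and with the orbit sizes, the sum over $x$ in the fibre collapses to $|\tau_\fg^{-1}([\xi,n])|$ times the bracketed factor. This yields \eqref{eq:intro-main2}.

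\emph{Main obstacle.} I expect the bookkeeping in Step 3 to be the hardest part. It requires checking that the class-function identity, the centraliser orders, and the Green function values depend only on the type $[\xi,n]$. It also requires verifying Letellier's formula and the Lie algebra decomposition of $1_{\fl^F}$ under our hypotheses, which are large $q$ and very good $p$. Once these are in hand, the rest follows the proof of Theorem \ref{thm:result-group} verbatim.
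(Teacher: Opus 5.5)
You follow the paper's route. Your Step 1 is Lemma~\ref{lem:f_p^g} together with $f_{\fp^F}^{\fg^F}=\mathfrak{R}^{\fg}_{\fl}(1_{\fl})$; weighting by $|C_{G^F}(x)|/|G^F|$ over all of $\fg^F$ is an equivalent way to reach the orbit sum. Then come transitivity, the character formula, and grouping by type.

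The gap is the identity $1_{\fl}=\frac{1}{|W_L|}\sum_{w\in W_L}\mathfrak{R}^{\fl}_{\ft_w}(1_{\ft_w})$, which you cite as ``the Lie algebra version'' of the Deligne--Lusztig decomposition but do not prove. This is the one nontrivial input, not bookkeeping. It does not come for free from the group identity, and the paper has to establish it in Proposition~\ref{prop:f_p^g}, flagging a purely Lie-theoretic proof as open.

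The paper argues pointwise. For $x\in\fl^F$ it picks $g\in L^F$ (possible for $q$ large) with $C_L(g_s)$ matching $C_L(x_s)$ and $g_u$ matching $\omega_L(x_n)$. It then reads off from the two character formulas that $\mathfrak{R}^{\fl}_{\ft_w}(1)(x)=R^L_{T_w}(1)(g)$ for every $w\in W_L$, and concludes with $\frac{1}{|W_L|}\sum_w R^L_{T_w}(1_{T_w})=1_{L^F}$.

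A possible alternative uses Letellier's commutation formula for Fourier transforms and $\mathfrak{R}$. It should make $\mathcal{F}^{\fl}$ of the right-hand side a $w$-independent multiple of $\frac{1}{|W_L|}\sum_w\epsilon_L\epsilon_{T_w}Q^L_{T_w}$ on the nilpotent cone, and zero elsewhere. That function is the Steinberg character of $L^F$ on unipotent elements, so it vanishes off $0$. The right-hand side is then constant, with value $\frac{1}{|W_L|}\sum_w Q^L_{T_w}(1)=1$ at $0$.

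Step 3 also needs repair. The claim that $|C_{G^F}(x)|$ is constant on fibres of $\tau_\fg$ is false, because types are only taken up to $G$-conjugacy. Every regular semisimple $x$ has type $[T,0]$, while $C_{G^F}(x)$ runs through the various $T_w^F$. You do not need this claim, though. Summing $|C_{G^F}(x)|/|G^F|$ over one $G^F$-orbit gives $1$, so a fibre contributes (number of $G^F$-orbits of that type)$\times$(value). That is how $|\tau_\fg^{-1}([\xi,n])|$ has to be read, as in the paper, whose sum runs over $\mathcal{R}(\fp^F,\fg^F)$.

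Constancy of the value itself has the same defect. Take $G=GL_2$ and $P=G$. Split and elliptic regular semisimple elements share the type $[T,0]$, but $\mathfrak{R}^{\fg}_{\ft_1}(1)$ is $2$ on the former and $0$ on the latter. Both $\delta^w_\xi$ equal $1$ here, so the displayed formula assigns $2$ rather than $1$ to each regular semisimple orbit. The paper's final step makes the same identification through its ``for some iff for every'' argument, so here you are no worse off than the paper. For either argument to be correct, the type must record the $G^F$-class of the $F$-stable Levi subgroup $C_G(x_s)$.

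Your count $|W_G(T_w)^F/W_\xi(T_w)^F|$ has a further issue. It presumes that the $G^F$-conjugates of $T_w$ lying in $\xi$ form a single $\xi^F$-class. For $\xi=GL_2\times GL_2\subset GL_4$ and $w=(12)$ they do not, and the sum over $g$ splits into two terms with different Green functions.
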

 The map $\tau_\fg$ is the partition map of elements in $\fg^F$ using the Jordan decomposition and centraliser. For the definition of $\tau_\fg$, please see \S\ref{s:types}, especially Equation \eqref{eq:taug}.

 \bigskip 
The last result is that we have an additive analogue of the result of Goodwin and R\"ohrle \cite[Proposition 4.4]{goodwin2009rational}.
 \begin{thm}[Theorem \ref{thm:GR-additive-analogue}]
For a $F$-stable parabolic subgroup $P=U\rtimes L$ with $\fn=\mathrm{Lie}(U)$, we have 
\[
 k(\mathfrak{n}^F,G^F)=\frac{ |L^F|_{p'}}{|W_L|}\sum_{w\in W_L} { (-1)^{\epsilon_G\epsilon_L}  } \sum_{n\in\mathcal{R}(\mathfrak{n}^F,\fg^F)} Q_{T_w}^G(\omega(n)),
\]
where $\epsilon_L$ is a $F$-relative rank of $L$, and $\mathcal{R}(\fn^F,\fg^F)$ the set of representatives of adjoint orbits in $\fg^F$ that intersect $\fn^F$.
\end{thm}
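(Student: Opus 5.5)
The plan is to rewrite $k(\fn^F,G^F)$ as a sum over adjoint orbits of values of a Harish-Chandra induced function, and then to expand that function in Green functions. Following the additive set-up of \S\ref{sss:additive-analogue}, I take
\[
k(\fn^F,G^F)=\frac{1}{|P^F|}\sum_{x\in\fn^F}|C_{G^F}(x)|.
\]
Every $x\in\fn^F$ is nilpotent, and $|C_{G^F}(x)|$ is constant on $G^F$-orbits. Grouping the terms by orbit therefore gives
\[
k(\fn^F,G^F)=\sum_{n\in\mathcal{R}(\fn^F,\fg^F)}\frac{|\mathcal{O}_n\cap\fn^F|\,|C_{G^F}(n)|}{|P^F|}.
\]
Next I use Letellier's Harish-Chandra induction $R_{\fl\subset\fp}^{\fg}$, which inflates a function from $\fl^F$ to $\fp^F$ and then induces. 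Applied to the indicator $1_{\{0\}}$ on $\fl^F$, it should give
\[
R_{\fl\subset\fp}^{\fg}(1_{\{0\}})(n)=\frac{1}{|P^F|}\#\{g\in G^F : \Ad_{g^{-1}}n\in\fn^F\}=\frac{|C_{G^F}(n)|\,|\mathcal{O}_n\cap\fn^F|}{|P^F|}.
\]
Hence $k(\fn^F,G^F)=\sum_{n\in\mathcal{R}(\fn^F,\fg^F)}R_{\fl\subset\fp}^{\fg}(1_{\{0\}})(n)$. This is the additive counterpart of Goodwin--R\"ohrle's step, where the indicator of the identity of $L^F$ is inflated and induced.

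The second step is to decompose $1_{\{0\}}$ on $\fl^F$ into Deligne--Lusztig type inductions from tori, in analogy with the decomposition of the regular character \eqref{eq:step2}. Since the induced function above is evaluated only at nilpotent elements, I only need an identity valid on $\fl^F_{nil}$. The group version on unipotents is
\[
\mathrm{reg}_{L^F}=\frac{1}{|W_L|}\sum_{w\in W_L}\epsilon_L\epsilon_{T_w}R_{T_w}^L(\mathrm{reg}_{T_w^F}).
\]
Its additive analogue should read
\[
1_{\{0\}}\big|_{\fl^F_{nil}}=\frac{|L^F|_{p'}}{|L^F|\,|W_L|}\sum_{w\in W_L}\epsilon_L\epsilon_{T_w}R_{\ft_w}^{\fl}(1_{\{0\}}).
\]
I will use Letellier's identification $R_{\ft_w}^{\fl}(1_{\{0\}})(y)=Q_{T_w}^L(\omega(y))$ on nilpotents, via the $G$-equivariant isomorphism $\omega$, with the normalisation checked against the orthogonality relations for Green functions. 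The factor $|L^F|_p$ cancels with the $|P^F|$-type normalisations, leaving $|L^F|_{p'}$.

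The third step applies transitivity of induction, $R_{\fl\subset\fp}^{\fg}\circ R_{\ft_w}^{\fl}=R_{\ft_w}^{\fg}$, together with Letellier's identification again. This gives $R_{\ft_w}^{\fg}(1_{\{0\}})(n)=Q_{T_w}^G(\omega(n))$. Substituting this into the orbit sum and interchanging the finite sums over $w$ and $n$ produces the stated formula. The sign $\epsilon_L\epsilon_{T_w}$ must be rewritten as $(-1)^{\epsilon_G\epsilon_L}$ using the relative ranks of the twisted tori $T_w$ for $w\in W_L$.

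I expect the main obstacle to be the second step. The delicate points are the precise normalisation of the additive Harish-Chandra and Deligne--Lusztig inductions (Letellier's conventions differ from the group case by powers of $q$ and by $|L^F|_p$), and the exact form of the sign. My first attempt is to derive the identity from the orthogonality relation $\frac{1}{|L^F|}\sum_{y\in\fl^F_{nil}}Q_{T_w}^L(\omega(y))Q_{T_{w'}}^L(\omega(y))=|N_{L^F}(T_w,T_{w'})|/|T_w^F|$, which I would combine with the fact that the functions $Q_{T_w}^L$, averaged over $W_L$ with signs, span the relevant space of functions. I would then check the constant against the value at $0$, using $Q_{T_w}^L(1)=\epsilon_L\epsilon_{T_w}|L^F|_{p'}/|T_w^F|$.
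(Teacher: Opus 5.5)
\textbf{The gap is the constant: with your normalisation the argument proves a formula off by a factor $|L^F|$.} You normalise by $1/|P^F|$. Your Step 1 then gives $k(\fn^F,G^F)=\sum_{n}R_{\fl\subset\fp}^{\fg}(1_{\{0\}})(n)=\sum_n f_{\fn^F}^{\fg^F}(n)$, so the $1/|P^F|$ of your definition is exactly the $1/|P^F|$ of Harish-Chandra induction. Your Step 2 coefficient is $\frac{|L^F|_{p'}}{|L^F|}=\frac{1}{|L^F|_p}$, and nothing is left for it to cancel against. Carried out honestly, your argument proves
\[
\frac{1}{|P^F|}\sum_{x\in\fn^F}|C_{G^F}(x)|=\frac{1}{|L^F|_p\,|W_L|}\sum_{w\in W_L}\epsilon_L\epsilon_{T_w}\sum_{n\in\mathcal{R}(\fn^F,\fg^F)}Q^G_{T_w}(\omega(n)),
\]
which differs from the statement by $|L^F|$. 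Your sentence saying that $|L^F|_p$ ``cancels with the $|P^F|$-type normalisations, leaving $|L^F|_{p'}$'' is false. For example, take $G=GL_2$, $P=B$: your left side is $q+2$, while the stated right side is $(q-1)^2(q+2)$. The statement requires normalising by $1/|U^F|$, the additive analogue of counting $U^F$-classes in $G^F$. This is forced by Lemma~\ref{lem:f_n^g}: $k(\fn^F,G^F)=|L^F|\sum_n f_{\fn^F}^{\fg^F}(n)$, because $|\mathcal{O}_n\cap\fn^F|\,|C_{G^F}(n)|=|P^F|\,f_{\fn^F}^{\fg^F}(n)$. With that definition, $|L^F|\cdot|L^F|_p^{-1}=|L^F|_{p'}$, and your argument yields exactly the stated formula.

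\textbf{The sign.} Do not try to rewrite $\epsilon_L\epsilon_{T_w}$ as a $w$-independent sign; that is impossible. Take $P=G=GL_2$, $\fn=0$: you need $-1$ at the nontrivial $w$ to recover $|G^F|$. $\epsilon_L\epsilon_{T_w}$ is precisely the sign the paper's own Step 4 produces, via $\dim R^L_{T_w}(1_{T_w})\,|T_w^F|=\epsilon_L\epsilon_{T_w}|L^F|_{p'}$. So the $(-1)^{\epsilon_G\epsilon_L}$ in the statement must be read as $\epsilon_L\epsilon_{T_w}$.

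\textbf{Step 2.} Your additive identity is correct, but your displayed group identity is not: its coefficient should be $\dim R^L_{T_w}(1_{T_w})=\epsilon_L\epsilon_{T_w}|L^F|_{p'}/|T_w^F|$, not $\epsilon_L\epsilon_{T_w}$. Also, orthogonality of Green functions alone does not give pointwise vanishing at nonzero nilpotents. The clean input is that $\frac{1}{|W_L|}\sum_{w}\epsilon_L\epsilon_{T_w}Q^L_{T_w}$ is the Steinberg character of $L^F$ restricted to unipotents: it equals $|L^F|_p$ at $1$ and $0$ elsewhere. The paper obtains the same thing in its Step 2 by evaluating the regular-character decomposition at unipotent elements.

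\textbf{Comparison of routes.} With these corrections your route is the paper's, minus the Fourier transform. The paper expands $\chi_{\ft_w}=\sum_y\mathcal{F}_{\ft_w}(\delta_y)$ only to collapse it back at nilpotent $n$ via \eqref{eq:sum-regular-additive}. That amounts to your direct use of $R^{\fg}_{\ft_w}(1_{\{0\}})(n)=Q^G_{T_w}(\omega(n))$.
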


 \begin{rem}Note that Equation \eqref{eq:intro-main1} and Equation \eqref{eq:intro-main2} are different since $|\tau_{G}([C_G(s),u])|\neq |\tau_\fg^{-1}([C_G(x),n])|$ and $\Xi_L(G)\neq \Xi_\fl(\fg)$.
Furthermore, for the PORC property of $k(\fp^F,G^F)$ and $k(\fn^F,G^F)$, we need to show that $|\tau_\fg^{-1}([C_G(x),1])|$ satisfies the PORC property. Note that the number of nilpotent adjoint orbits is also independent on $q$ as noted in \cite[Proposition 37]{KNWG}.
 \end{rem}

\subsection{Structure of the paper}
In \S\ref{s:2} and \S\ref{sss:additive-analogue}, we present how to compute $k(P^F,G^F)$ using the character theory of $G^F$ and $k(\fp^F,G^F)$ and $k(\fn^F,G^F)$ using the Harish-Chandra induction on $\fg^F$.
In \S\ref{s:types}, we partition elements in $G^F$ and $\fg^F$ into a finite set which is independent on $q$.
In \S\ref{s:5} and \S\ref{s:6}, we conclude the above results.

  \section{Counting formula for reductive group}\label{s:2}
In this section, we introduce how to compute $k(P^F,G^F)$ using the character theory of $G^F$.
\subsection{A formula of $k(P^F,G^F)$}
Let $M$ be a finite group, and $N$ its subgroup. Then we define a function $f_N^M$ given by $$f_N^M(x)=| \{ mNm^{-1}\,|\,x\in mNm^{-1}\ \text{for some } m\in M\}|.$$ Then we have the following useful lemma to consider $k(P^F,G^F)$.

\begin{lem}\cite[Remark 4.14]{goodwin2009rational}
We have $$k(P^F,G^F)=\sum_{g\in \mathcal{R}(P^F,G^F)}f_{P^F}^{G^F}(g),$$ where $\mathcal{R}(P^F,G^F)$ is a set of representatives of conjugacy classes in $G^F$ that intersect $P^F$.
\end{lem}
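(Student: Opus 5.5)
I will prove this by a double count, using Burnside's lemma for the conjugation action of $P^F$ on $G^F$. The number of $P^F$-orbits on $G^F$ is
\[
k(P^F,G^F)=\frac{1}{|P^F|}\sum_{p\in P^F}|C_{G^F}(p)|=\frac{1}{|P^F|}\,\bigl|\{(p,g)\in P^F\times G^F \,:\, pg=gp\}\bigr|,
\]
where the second equality holds because each fixed point of $p$ is an element of $G^F$ commuting with $p$. Counting the same set of commuting pairs by summing over $g\in G^F$ instead, and swapping the roles of the two groups, gives $k(P^F,G^F)=\frac{1}{|P^F|}\sum_{g\in G^F}|C_{G^F}(g)\cap P^F|$. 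Either form will do. I will use the first one and reorganise it according to $G^F$-conjugacy classes.

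Next I group the elements $p\in P^F$ by their $G^F$-conjugacy class $C$. Only classes meeting $P^F$ contribute, so these are exactly the classes represented by $\mathcal{R}(P^F,G^F)$. For the class $C$ of a representative $g$, every $p\in C$ satisfies $|C_{G^F}(p)|=|C_{G^F}(g)|=|G^F|/|C|$. Hence the contribution of $C$ is
\[
\frac{1}{|P^F|}\,|C\cap P^F|\,\frac{|G^F|}{|C|}.
\]

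The main step is to show that $|C\cap P^F|\,|G^F|/(|P^F|\,|C|)=f_{P^F}^{G^F}(g)$. I do this by counting the set $\{(x,Q)\,:\, x\in C,\ Q \text{ a } G^F\text{-conjugate of } P^F,\ x\in Q\}$ in two ways.
\begin{itemize}
\item Counting by $Q$: there are $|G^F:N_{G^F}(P^F)|$ conjugates $Q$, and each contains $|C\cap Q|=|C\cap P^F|$ elements of $C$, since conjugation preserves $C$.
\item Counting by $x$: each $x\in C$ lies in $f_{P^F}^{G^F}(x)=f_{P^F}^{G^F}(g)$ conjugates, because $f$ is a class function.
\end{itemize}
Equating the two counts gives $|C|\,f(g)=|G^F:N_{G^F}(P^F)|\,|C\cap P^F|$.

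To finish I use $N_{G^F}(P^F)=P^F$. This holds because parabolic subgroups are self-normalising: $N_G(P)=P$, so $N_{G^F}(P^F)\subseteq N_G(P)^F=P^F$. This needs $P^F$ to determine $P$, and I expect that to be the only genuine point to justify. I would check it via the $F$-stable Borel subgroup contained in $P$, or cite the standard fact that $N_{G^F}(P^F)=P^F$ for finite groups with a split BN-pair. With this, $|G^F:N_{G^F}(P^F)|=|G^F|/|P^F|$, the contribution of $C$ equals $f_{P^F}^{G^F}(g)$, and summing over $\mathcal{R}(P^F,G^F)$ gives the claimed formula.
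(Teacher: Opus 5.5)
Your proof is correct and is essentially the paper's route. The paper only cites Goodwin--R\"ohrle for this lemma, but in its additive analogue (Lemma \ref{lem:f_p^g}) it runs exactly your argument: it writes $k$ as $\frac{1}{|P^F|}\sum_{p}|C_{G^F}(p)|$, groups by classes, double counts pairs (element, conjugate of the parabolic) to get $|G^F:N_{G^F}(P^F)|\,|C\cap P^F|=|C|\,f(g)$, and finishes with $N_{G^F}(P^F)=P^F$ and orbit--stabiliser; your appeal to self-normalisation of parabolics in the split BN-pair of $G^F$ justifies that last step, which the paper takes for granted.
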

Using \cite[Proof of Lemma 3.2]{goodwin2009rational}, we can obtain another form of $k(P^F,G^F)$ using Deligne-Lusztig characters. This lemma concerns unipotent elements, but the proof also works for arbitrary elements.
%except the last part, i.e., $f_{P}^G(u)=\frac{1}{|W_L|}\sum_{w\in W_L}Q_{T_w}^G(u)$.
\begin{prop}\cite[Proof of Lemma 3.2]{goodwin2009rational} For a $F$-stable parabolic subgroup $P$, we have 
$$f_{P^F}^{G^F}=\frac{1}{|W_L|}\sum_{w\in W_L}R_{T_w}^G(1_{T_w}),$$
where $1_{T_w}$ is the trivial character on $T_w^F$.
%where $W_L$ is the Weyl group of $L$ and $T_w$ is the $w$-twisted torus of a split maximal torus $T$.
\end{prop}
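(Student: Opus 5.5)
Since $P^F$ is self-normalising in $G^F$, the map $gP^F\mapsto gP^Fg^{-1}$ identifies the $G^F$-conjugates of $P^F$ with $G^F/P^F$. It follows that $f_{P^F}^{G^F}(x)$ is the number of fixed points of $x$ on $G^F/P^F$, i.e.
\[
f_{P^F}^{G^F}=\mathrm{Ind}_{P^F}^{G^F}(1_{P^F}).
\]
The plan is then to compute this permutation character using Harish-Chandra induction, and afterwards rewrite the result through the Deligne--Lusztig characters of the maximal tori of $L$.

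\textbf{Step 1.} Since $P=U\rtimes L$ with $U^F$ normal in $P^F$, the inflation of $1_{L^F}$ to $P^F$ is $1_{P^F}$. Hence $\mathrm{Ind}_{P^F}^{G^F}(1_{P^F})=R_L^G(1_{L^F})$, the Harish-Chandra induction of the trivial character of $L^F$. This is an honest Lusztig induction from a split Levi subgroup.

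\textbf{Step 2.} Decompose $1_{L^F}$ in $L^F$. Because $T$ is a split maximal torus of $L$, the standard orthogonality (the projection of the trivial character onto the uniform functions) gives
\[
1_{L^F}=\frac{1}{|W_L|}\sum_{w\in W_L}R_{T_w}^L(1_{T_w}).
\]
Here we use that $F$ acts trivially on $W_L$, so the $L^F$-classes of $F$-stable maximal tori of $L$ are parametrised by $W_L$ up to conjugacy, with $T_w$ the torus of type $w$. This is a standard result of Deligne--Lusztig (the trivial character is uniform) and can be quoted.

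\textbf{Step 3.} Apply $R_L^G$ and use transitivity of Lusztig induction, $R_L^G\circ R_{T_w}^L=R_{T_w}^G$. This holds because $T_w\subset L$ and $L$ is $F$-stable, and it is compatible with the choice of parabolics. We obtain
\[
f_{P^F}^{G^F}=\frac{1}{|W_L|}\sum_{w\in W_L}R_{T_w}^G(1_{T_w}).
\]
The main point needing care is this transitivity step: one must check that the Deligne--Lusztig induction from $T_w$ to $L$, composed with the Harish-Chandra induction through $P$, agrees with $R_{T_w}^G$ defined via some Borel subgroup containing $T_w$. I would cite the standard transitivity theorem for Lusztig induction (Digne--Michel), which holds whenever $L$ is a Levi subgroup of an $F$-stable parabolic subgroup. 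This is exactly our situation, so no independence-of-parabolic issue arises.

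Finally, unlike the unipotent case in Goodwin--R\"ohrle, no restriction to unipotent elements is used anywhere. The identity is therefore an equality of class functions on all of $G^F$.
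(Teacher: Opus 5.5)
Your proposal is correct and follows essentially the same route as the cited proof of Goodwin--R\"ohrle, which the paper also mirrors in its additive analogue (Proposition \ref{prop:f_p^g}): use $N_{G^F}(P^F)=P^F$ to identify $f_{P^F}^{G^F}$ with $\mathrm{Ind}_{P^F}^{G^F}(1)=R_L^G(1_{L^F})$, decompose $1_{L^F}=\frac{1}{|W_L|}\sum_{w\in W_L}R_{T_w}^L(1_{T_w})$, and apply transitivity. One minor overstatement: transitivity gives Deligne--Lusztig induction from $T_w$ relative to the specific Borel subgroup $B_LU$ (with $B_L\supset T_w$ a Borel subgroup of $L$), so identifying this with $R_{T_w}^G$ does rely on the standard fact that induction from a torus is independent of the chosen Borel subgroup, contrary to your remark that no such issue arises.
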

 
 By composing the above lemma and the proposition, we get the following result.
 \begin{cor}\label{coro:k}
 We have \[
k(P^F,G^F)=\sum_{g\in \mathcal{R}(P^F,G^F)} \frac{1}{|W_L|}\sum_{w\in W_L}R_{T_w}^G(1_{T_w})(g).
\]\end{cor}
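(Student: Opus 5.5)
The corollary follows by substituting the Proposition into the Lemma, so the plan is to verify the two ingredients are compatible and then compose them. By the Lemma (from \cite[Remark 4.14]{goodwin2009rational}),
\[
k(P^F,G^F)=\sum_{g\in \mathcal{R}(P^F,G^F)} f_{P^F}^{G^F}(g).
\]
Before substituting, I will check that the right-hand side does not depend on the choice of representatives. The function $f_{P^F}^{G^F}$ is a class function on $G^F$: if $x' = hxh^{-1}$ with $h\in G^F$, then $N\mapsto hNh^{-1}$ is a bijection from the set of $G^F$-conjugates of $P^F$ containing $x$ onto the set of those containing $x'$.

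Next I evaluate the identity of class functions given by the Proposition,
\[
f_{P^F}^{G^F}=\frac{1}{|W_L|}\sum_{w\in W_L}R_{T_w}^G(1_{T_w}),
\]
at each representative $g\in\mathcal{R}(P^F,G^F)$. Its right-hand side is also a class function, since Deligne--Lusztig characters are class functions, so the evaluation is consistent with the choice of representatives. Substituting it termwise into the Lemma's sum gives exactly
\[
k(P^F,G^F)=\sum_{g\in \mathcal{R}(P^F,G^F)} \frac{1}{|W_L|}\sum_{w\in W_L}R_{T_w}^G(1_{T_w})(g).
\]

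The only real subtlety is that the Proposition is quoted from the proof of \cite[Lemma 3.2]{goodwin2009rational}, which is stated there only for unipotent elements. I rely on the paper's remark that the argument works for arbitrary elements. The reason is that the identity comes from three standard facts:
\begin{itemize}
\item $f_{P^F}^{G^F}$ is the permutation character $\mathrm{Ind}_{P^F}^{G^F}1$ of $G^F$ acting on $G^F/P^F$, i.e.\ on the $F$-stable conjugates of $P$;
\item this induced character equals the Harish-Chandra induction $R_L^G(1_L)$;
\item $1_L=\frac{1}{|W_L|}\sum_{w\in W_L}R_{T_w}^L(1)$, together with transitivity $R_L^G\circ R_{T_w}^L=R_{T_w}^G$.
\end{itemize}
None of these steps uses unipotence, so the identity holds at every $g$.
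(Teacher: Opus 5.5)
Your proposal is correct and follows the paper's argument: the corollary is obtained by substituting the Proposition into the Lemma. Your justification that the Proposition holds at arbitrary elements (permutation character $=R_L^G(1_L)$, the decomposition $1_{L^F}=\frac{1}{|W_L|}\sum_{w\in W_L}R_{T_w}^L(1)$, and transitivity) is the same reasoning the paper uses for its additive analogue of $f_{\fp^F}^{\fg^F}$. The one implicit input worth stating is $N_{G^F}(P^F)=P^F$, which is what identifies $f_{P^F}^{G^F}$ with the permutation character on $G^F/P^F$.
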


\section{Counting formula for Lie algebra}\label{sss:additive-analogue} 
Let us consider an additive version of this story.   It is easy to see that \[
k(P^F,G^F)=\frac{1}{|P^F|}\sum_{p\in P^F}|C_{G^F}(p)|=\frac{1}{|P^F|}\sum_{g\in G^F}|C_{P^F}(g)|,
\]
cf. \cite[Proof of Lemma 4.1]{goodwin2009rational}.
Furthermore, it is equivalent to
\[
k(P^F,G^F)=\frac{1}{|P^F|}|\{(g,p)\in G^F\times P^F \,|\, gp=pg\}|.
\]
Then we consider an additive analogue of $k(P^F,G^F)$ by replacing a parabolic subgroup $P$ to its Lie algebra $\fp$, i.e., \[
\begin{split}
k(\fp^F,G^F)&:=\frac{1}{|P^F|}|\{ (g,x)\in G^F\times \fp^F\,|\, Ad_g(x)=x\}|\\
&= \frac{1}{|P^F|}\sum_{g\in G^F}|C_{\fp^F}(g)|=\frac{1}{|{P^F}|}\sum_{x\in \fp^F} |C_{G^F}(x)|,
\end{split}
\]
where $C_{\fp^F}(g):=\{x\in \fp^F\,|\, Ad_g(x)=x\}$ and $C_{G^F}(x)=\{g\in G^F\,|\, Ad_g(x)=x\}.$
\subsection{Parabolic subalgebra}

 Let us define a function $f_{\fp^F}^{\fg^F}: \fg^F\rightarrow \mathbb{N}\cup \{0\}$ given by $$f_{\fp^F}^{\fg^F}(x)=|\{ Ad_g(\fp^F)\,|\, x\in Ad_g(\fp^F)\ \text{for some }g\in G^F\}|,$$ and $\mathcal{R}(\fp^F,\fg^F)$ is the set of representatives of adjoint orbits in $\fg^F$ that intersect $\fp^F$.
\begin{lem}\label{lem:f_p^g}
We have $$k(\fp^F,G^F)= \sum_{x\in \mathcal{R}(\fp^F,\fg^F)}f_{\fp^F}^{\fg^F}(x).$$
\end{lem}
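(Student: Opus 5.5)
We argue by double counting, following the group case (the Lemma from \cite[Remark 4.14]{goodwin2009rational}). Start from the definition
\[
k(\fp^F,G^F)=\frac{1}{|P^F|}\sum_{x\in \fp^F}|C_{G^F}(x)|,
\]
and rewrite the sum over $\fp^F$ as a sum over adjoint $G^F$-orbits $\mathcal{O}$ in $\fg^F$ meeting $\fp^F$. For such an orbit with representative $x\in\mathcal{R}(\fp^F,\fg^F)$, every $y\in\mathcal{O}$ has $|C_{G^F}(y)|=|C_{G^F}(x)|=|G^F|/|\mathcal{O}|$. Hence
\[
k(\fp^F,G^F)=\sum_{x\in\mathcal{R}(\fp^F,\fg^F)}\frac{|G^F|}{|P^F|}\cdot\frac{|\mathcal{O}_x\cap\fp^F|}{|\mathcal{O}_x|}.
\]
It remains to show that this summand equals $f_{\fp^F}^{\fg^F}(x)$.

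To do this, count the set
\[
S=\{(y,\fp')\,:\,y\in\mathcal{O}_x,\ \fp'\in\{Ad_g(\fp^F)\}_{g\in G^F},\ y\in\fp'\}
\]
in two ways. First we need the number of conjugates $Ad_g(\fp^F)$. By $G^F$-equivariance of $g\mapsto Ad_g(\fp^F)$, it equals $|G^F|/|N_{G^F}(\fp^F)|$. We then use $N_{G^F}(\fp^F)=P^F$. This holds because $N_G(\fp)=P$ for a parabolic subgroup under the standing assumption that $p$ is very good; over $\Fq$ we can argue that $\fp^F$ spans $\fp$ (since $\fp$ is defined over $\Fq$), so $N_{G^F}(\fp^F)=N_G(\fp)^F=P^F$.

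Now project $S$ to the second coordinate. Each conjugate $Ad_g(\fp^F)$ contains $|\mathcal{O}_x\cap Ad_g(\fp^F)|=|Ad_{g^{-1}}(\mathcal{O}_x)\cap\fp^F|=|\mathcal{O}_x\cap\fp^F|$ points, so
\[
|S|=\frac{|G^F|}{|P^F|}\,|\mathcal{O}_x\cap\fp^F|.
\]
Projecting instead to the first coordinate, each $y\in\mathcal{O}_x$ lies in $f_{\fp^F}^{\fg^F}(y)=f_{\fp^F}^{\fg^F}(x)$ conjugates, because $f$ is constant on orbits. So $|S|=|\mathcal{O}_x|\,f_{\fp^F}^{\fg^F}(x)$. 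Equating the two counts gives exactly the required identity for the summand, and summing over $\mathcal{R}(\fp^F,\fg^F)$ proves the lemma.

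The only non-formal step is $N_{G^F}(\fp^F)=P^F$. The inclusion $P^F\subseteq N_{G^F}(\fp^F)$ is clear. For the reverse inclusion, note that $N_G(\fp)$ is a closed subgroup containing $P$, hence parabolic with the same Lie algebra up to its normaliser; it equals $P$ since in very good characteristic $\mathrm{Lie}(N_G(\fp))=\fp$ and parabolics containing $P$ are determined by their Lie algebras. If one wishes to avoid this, one can simply interpret the count of conjugates as $|G^F/P^F|$, indexing the conjugates by cosets. This is how the group case is handled, and it yields the same formula.
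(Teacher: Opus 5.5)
Your proposal is correct and is essentially the paper's own proof: it uses the same double count of pairs $(y,Ad_g(\fp^F))$ with $y$ in the orbit of $x$, combined with orbit--stabiliser and $N_{G^F}(\fp^F)=P^F$, which the paper simply asserts while you also sketch why it holds. One small caveat: your closing ``fallback'' of indexing conjugates by cosets of $P^F$ would turn $f_{\fp^F}^{\fg^F}$ into a count of cosets rather than of distinct subspaces, so it does not actually let you avoid $N_{G^F}(\fp^F)=P^F$ for the lemma as stated.
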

\begin{proof}
We use the method from \cite[Lemma 4.1]{goodwin2009rational}.  For a given $x\in \fg^F$ such that $Ad_{G^F}(x)\cap \fp^F\neq \emptyset$  (where $Ad_{G^F}(x)=\{ Ad_g(x)\,|\, g\in G^F\}$), let us consider the following set:
$$\{(y,Ad_g(\fp^F))\,|\, y\in Ad_{G^F}(x)\cap Ad_g(\fp^F)\ \text{for some }  g\in G^F\}$$   Then by computing its size
 in two different ways,
%{(y, gH) | y ∈ G· x, g ∈ G, y ∈ g H}
we can see that 
\[
|G^F:N_{G^F}(\fp^F)|\cdot | Ad_{G^F} (x)\cap \fp^F|=|Ad_{G^F}(x)| f_{\fp^F}^{\fg^F}(x).
\]
We have
\[
\begin{split}
k(\fp^F,G^F)&=\frac{1}{|P^F|}\sum_{x\in \fp^F} |\{ g\in G^F\,|\, Ad_g(x)=x\}|\\
&=\frac{1}{|P^F|}\sum_{x\in \mathcal{R}(\fp^F,\fg^F)}\sum_{\substack{Ad_h(x)\in Ad_{G^F}(x)\cap \fp^F}}  |\{ g\in G^F\,|\, Ad_{gh}(x)=Ad_{h}(x)\}|\\
&=\frac{1}{|P^F|}\sum_{x\in \mathcal{R}(\fp^F,\fg^F)} |\{ g\in G^F\,|\, Ad_{g}(x)=x\}|  \sum_{\substack{Ad_h(x)\in Ad_{G^F}(x)\cap \fp^F}} 1
\\
&=\frac{1}{|P^F|}\sum_{x\in \mathcal{R}(\fp^F,\fg^F)} |\{ g\in G^F\,|\, Ad_{g}(x)=x\}|  \cdot |Ad_{G^F}(x)\cap \fp^F|
\\
&=\frac{1}{|P^F|}\sum_{x\in \mathcal{R}(\fp^F,\fg^F)} \frac{|C_{G^F}(x)||N_{G^F}(\fp^F)||Ad_{G^F}(x)|}{|G^F|}f_{\fp^F}^{\fg^F}(x)
\\
&= \sum_{x\in \mathcal{R}(\fp^F,\fg^F)}  f_{\fp^F}^{\fg^F}(x)
\end{split}
\]
from $N_{G^F}(\fp^F)=P^F$ and orbit-stabiliser theorem.
\end{proof}
Similar to the group case, we will consider the function $f_{\fp^F}^{\fg^F}$ over the Deligne-Lusztig induction $\mathfrak{R}_{\mathfrak{l}\subset \fp}^{\fg}$ defined in \cite[Definition 3.2.13]{letellier2004fourier}. Note that this is coincides with Harish-Chandra induction defined in \cite[Definition 3.1.2]{letellier2004fourier} from \cite[Proposition 3.2.23]{letellier2004fourier} since we assume that $P$ is $F$-stable.
Note that this induction is independent on the choice of parabolic subalgebra from \cite[3.2.27]{letellier2004fourier}, so we denote that by $\mathfrak{R}_{\mathfrak{l}}^{\fg}$.

\begin{prop}\label{prop:f_p^g}
Let $\fp=\fl\oplus \mathfrak{u}$ for $\mathfrak{u}=\mathrm{Lie}(U)$. Then we have
\[
f_{\fp^F}^{\fg^F}=\frac{1}{|W_L|}\sum_{w\in W_L} \mathfrak{R}_{\ft_w}^\fg(1_{\ft_w}),
\]
where $\ft_w=\mathrm{Lie}(T_w)$ and $1_{\ft_w}$ is the constant function on $\ft_w^F$ such that $1_{\ft_w}(x)=1$ for any $x\in \ft_w^F$.\end{prop}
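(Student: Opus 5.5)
The plan is to follow the group-theoretic argument of \cite[Proof of Lemma 3.2]{goodwin2009rational}, replacing Deligne--Lusztig induction by Harish-Chandra induction on $\fg^F$. We first rewrite $f_{\fp^F}^{\fg^F}$ as an induced function. Since $N_{G^F}(\fp^F)=P^F$, the $G^F$-conjugates of $\fp^F$ are in bijection with $G^F/P^F$. Hence
\[
f_{\fp^F}^{\fg^F}(x)=|\{gP^F\in G^F/P^F \,|\, Ad_{g^{-1}}(x)\in \fp^F\}|=\frac{1}{|P^F|}\sum_{\substack{g\in G^F\\ Ad_{g^{-1}}(x)\in\fp^F}}1 .
\]
This is exactly the Harish-Chandra induction $\mathfrak{R}_{\fl}^{\fg}(1_{\fl})(x)$ of the constant function $1_{\fl}$ on $\fl^F$. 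Recall that \cite[Definition 3.1.2]{letellier2004fourier} gives
\[
\mathfrak{R}_{\fl}^{\fg}(f)(x)=|P^F|^{-1}\sum_{g\,:\,Ad_{g^{-1}}x\in\fp^F} f(\pi(Ad_{g^{-1}}x)),
\]
where $\pi:\fp\to\fl$ is the projection. Since $1_{\fl}\circ\pi$ is identically $1$ on $\fp^F$, the two expressions agree. So the first step is to check this identification of the normalisation against Letellier's definition: $f_{\fp^F}^{\fg^F}=\mathfrak{R}_{\fl}^{\fg}(1_{\fl})$.

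The second step is to decompose the trivial function on $\fl^F$, as is done on the group side with $1_{L^F}=\frac{1}{|W_L|}\sum_{w\in W_L}R_{T_w}^L(1_{T_w})$. The Lie algebra analogue we want is
\[
1_{\fl}=\frac{1}{|W_L|}\sum_{w\in W_L}\mathfrak{R}_{\ft_w}^{\fl}(1_{\ft_w}).
\]
We would try to prove this directly rather than through character theory. First, evaluate $\mathfrak{R}_{\ft_w}^{\fl}(1_{\ft_w})(x)$ with the Deligne--Lusztig induction \cite[Definition 3.2.13]{letellier2004fourier}. Letellier's character formula (the Lie algebra analogue of the Deligne--Lusztig character formula) expresses this value through Green functions of $C_L(x_s)^\circ$ evaluated at $\omega(x_n)$. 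Summing over $w\in W_L$ then reduces the identity to the group-level fact $\frac{1}{|W_H|}\sum_{w}Q^H_{T_w}(u)=1$ for the reductive groups $H=C_L(x_s)$, applied after reorganising the sum over $L^F$-classes of $F$-stable maximal tori containing $x_s$. An alternative would be to use the Fourier transform, under which $\mathfrak{R}$ commutes up to sign and $1_{\ft_w}$ becomes a multiple of the delta function at $0$. That route leads back to Green functions as well, so the character-formula approach seems cleaner.

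The final step is transitivity of Lie algebra Deligne--Lusztig induction, $\mathfrak{R}_{\fl}^{\fg}\circ\mathfrak{R}_{\ft_w}^{\fl}=\mathfrak{R}_{\ft_w}^{\fg}$, which holds in \cite{letellier2004fourier}. Using it together with the independence of the parabolic \cite[3.2.27]{letellier2004fourier}, we apply $\mathfrak{R}_{\fl}^{\fg}$ linearly to the decomposition of $1_{\fl}$, and this gives the proposition. I expect the main obstacle to be the second step, the decomposition of $1_{\fl}$: it needs the precise form of Letellier's character formula and care in regrouping the tori $T_w$ according to $C_L(x_s)$. If a ready-made statement is available in \cite{letellier2004fourier}, for instance as the analogue of the orthogonality relation $\sum_w Q_{T_w}/|W|=1$, we would simply cite it.
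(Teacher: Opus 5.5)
Your proposal is correct and follows the paper's proof in three steps: identify $f_{\fp^F}^{\fg^F}=\mathfrak{R}_{\fl}^{\fg}(1_{\fl})$ via Letellier's Harish-Chandra induction, decompose $1_{\fl}=\frac{1}{|W_L|}\sum_{w\in W_L}\mathfrak{R}_{\ft_w}^{\fl}(1_{\ft_w})$ using the character formula, then conclude by linearity and transitivity. The only variation is in the middle step. The paper matches $x\in\fl^F$ with a group element $g\in L^F$ of the same type and uses that $\frac{1}{|W_L|}\sum_{w}R_{T_w}^L(1_{T_w})$ is the trivial character of $L^F$, whereas you regroup the tori to reduce directly to $\frac{1}{|H^F|}\sum_{T\subset H}|T^F|\,Q_T^H(\omega(x_n))=1$ for $H=C_L(x_s)$. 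Your route is slightly cleaner, since it avoids having to produce such a $g$ whose centraliser carries the correct $F$-structure.
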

For convenient, we denote the constant function on $\fg^F$ (for any Lie algebra $\fg$) by $1_{\fg}$.
\begin{proof}Let us consider the canonical projection $\pi_\fp\,:\, \fp\rightarrow\fl$.
Then we have $$f_{\fp^F}^{\fg^F}=\frac{1}{|P^F|}\sum_{\substack{g\in G^F\text{ s.t.}\\Ad_{g^{-1}}(x)\in \fp^F}}1_\fp(Ad_{g^{-1}}(x))=\frac{1}{|P^F|}\sum_{\substack{g\in G^F\text{ s.t.}\\Ad_{g^{-1}}(x)\in \fp^F}}1_\fl(\pi_\fp(Ad_{g^{-1}}(x)))=\mathfrak{R}_{\fl}^{\fg}(1_\fl),$$ where the last equality comes from \cite[Definition 3.1.2]{letellier2004fourier}.

Let us recall the fact that the derived subgroup $[L,L]$ of a Levi subgroup $L$ of $G$ is again simply connected, cf. \cite[\S4.2]{HHS}.   This implies that $C_L(s)$ is connected for any semisimple element $s$ in $L$. Furthermore, $C_L(x)$ is also connected for any semisimple element $x$ in $\fl$ \cite[Proposition 2.6.18]{letellier2004fourier}. (Recall that we assume that the characteristic $p$ is very good for any Levi subgroup $L$ of $G$, and $q$ is sufficiently large.)

Now, let us show that 
\begin{equation}\label{eq:decompose-trivial}
1_{\fl}=\frac{1}{|W_L|}\sum_{w\in W_L}\mathfrak{R}_{\ft_w}^\fl(1_{\ft_w}).\end{equation}
Note that this is a similar form to the trivial character of $G^F$ with Deligne-Lusztig characters.
 Under the Jordan decomposition, let us take $x=x_s+x_n\in \fl^F$ and $g=g_sg_u\in L^F$ such that $C_L(x_s)$ and $C_L(g_s)$ are $L$-conjugate and $\omega_L(x_n)$ and $g_u$ are $L^F$-conjugate for a $L$-equivariant isomorphism $\omega_L : L_{uni}\rightarrow \fl_{nil}$ as discussed in \cite[\S2.7.5]{letellier2004fourier}. (Note that we can find such $g$ when $q$ is sufficiently large.) 
Then from \cite[Definition 3.2.13]{letellier2004fourier} and \cite[Theorem 2.2.16]{GM20}, we can check that 
\begin{equation}\label{eq:DL-additive-formula}
\mathfrak{R}_{\ft_w}^\fl(1)(x)= |W_L(T_w)^F/W_{C_L(g_s)}(T_w)^F|Q_{T_w}^{C_L(x_s)}(\omega_L(x_n))=|W_L(T_w)^F/W_{C_L(g_s)}(T_w)^F|Q_{T_w}^{C_L(g_s)}(g_u)
\end{equation} and $$R_{T_w}^L(1)(g)=|W_L(T_w)^F/W_{C_L(g_s)}(T_w)^F|Q_{T_w}^{C_L(g_s)}(g_u)$$
with \cite[Lemma 10]{nam2025multiplicity}. We have $$\{h\in L^F\,|\, Ad_h(x_s)\in \ft_w^F\}=\underset{v\in W_L(T_w)^F/W_{C_L(x_s)}(T_w)^F}{\sqcup}\dot{v}C_L(x_s)^F$$ using the same proof of \cite[Lemma 10]{nam2025multiplicity} and the fact that any centraliser subgroups of semisimple elements in $G$ and $\fg$ are connected. This is possible since $T_w\subset C_L(x_s)$ from \cite[Proposition 2.6.4]{letellier2004fourier}.
This implies that
$$\frac{1}{|W_L|}\sum_{w\in W_L}\mathfrak{R}_{\ft_w}^\fl(1_{\ft_w})(x)=\frac{1}{|W_L|}\sum_{w\in W_L}{R}_{T_w}^L(1_{T_w})(g).$$ 
%for any $g=su\in L^F$ such that $C_L(s)$ and $C_L(x_s)$ are $L$-conjugate and $\omega(x_n)=u$. 
Now, with the fact that $\frac{1}{|W_L|}\sum_{w\in W_L}{R}_{T_w}^L(1)$ is the trivial character of $L^F$, we have 
$$\frac{1}{|W_L|}\sum_{w\in W_L}\mathfrak{R}_{\ft_w}^\fl(1_{\ft_w})(x)=1$$ for any $x\in \fl^F$. Therefore, we have $$\frac{1}{|W_L|}\sum_{w\in W_L}\mathfrak{R}_{\ft_w}^\fl(1_{\ft_w})=1_\fl.$$
Using observation $1_{\fl}=\frac{1}{|W_L|}\sum_{w\in W_L}\mathfrak{R}_{\ft_w}^\fl(1_{\ft_w})$, we have the following. 
\[\begin{split}
f_{\fp^F}^{\fg^F}&=\mathfrak{R}_{\fl}^{\fg}(1_\fl)=\mathfrak{R}_{\fl}^{\fg}\left(\frac{1}{|W_L|}\sum_{w\in W_L}\mathfrak{R}_{\ft_w}^\fl(1_{\ft_w})\right)\\
&=\frac{1}{|W_L|}\sum_{w\in W_L}\mathfrak{R}_{\fl}^{\fg}(\mathfrak{R}_{\ft_w}^\fl(1_{\ft_w}))\\
&=\frac{1}{|W_L|}\sum_{w\in W_L}\mathfrak{R}_{\ft_w}^\fg(1_{\ft_w}),
\end{split}
\]
where the last equality comes from \cite[Proposition 3.2.22]{letellier2004fourier}, and recall that $\mathfrak{R}_\fl^\fg(f+g)=\mathfrak{R}_\fl^\fg(f)+\mathfrak{R}_\fl^\fg(g)$ from the definition. Therefore, we are done.
\end{proof}

\begin{rem}
It would be an interesting problem to show that $\frac{1}{|W_G|}\sum_{w\in W_G }\mathfrak{R}_{\ft_w}^\fg(1_{\ft_w})=1_\fg$ with a Lie algebra-theoretical method.
\end{rem}

\begin{cor}\label{coro:additive-k}
We have $$k(\fp^F,G^F)= \sum_{x\in \mathcal{R}(\fp^F,\fg^F)}\frac{1}{|W_L|}\sum_{w\in W_L} \mathfrak{R}_{\ft_w}^\fg(1_{\ft_w})(x).$$
\end{cor}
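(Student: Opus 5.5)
This follows by combining Lemma \ref{lem:f_p^g} with Proposition \ref{prop:f_p^g}, exactly as Corollary \ref{coro:k} was obtained from the group-theoretic lemma and proposition.

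First, Lemma \ref{lem:f_p^g} gives
\[
k(\fp^F,G^F)=\sum_{x\in \mathcal{R}(\fp^F,\fg^F)} f_{\fp^F}^{\fg^F}(x).
\]
Second, Proposition \ref{prop:f_p^g} gives the identity of functions on $\fg^F$
\[
f_{\fp^F}^{\fg^F}=\frac{1}{|W_L|}\sum_{w\in W_L}\mathfrak{R}_{\ft_w}^\fg(1_{\ft_w}).
\]
I evaluate this identity at each representative $x\in\mathcal{R}(\fp^F,\fg^F)$ and substitute it term by term into the first sum. This produces the stated formula.

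Only one point needs care. The representatives in $\mathcal{R}(\fp^F,\fg^F)$ are chosen arbitrarily within their adjoint orbits, so each summand must not depend on that choice. Both sides of the identity are $G^F$-invariant functions on $\fg^F$:

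\begin{itemize}
\item The function $f_{\fp^F}^{\fg^F}$ is $G^F$-invariant because $Ad_h$ permutes the $G^F$-conjugates of $\fp^F$.
\item Each $\mathfrak{R}_{\ft_w}^\fg(1_{\ft_w})$ is $G^F$-invariant by its defining formula in \cite[Definition 3.2.13]{letellier2004fourier}.
\end{itemize}

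Hence each summand is well defined. I do not expect any real obstacle, since all the substantive work was done in Proposition \ref{prop:f_p^g}.
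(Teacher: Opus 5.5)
Your proposal is correct and is essentially the paper's argument: the corollary is obtained by evaluating the identity of Proposition \ref{prop:f_p^g} at each representative and substituting it into Lemma \ref{lem:f_p^g}. Your additional check that both sides are $G^F$-invariant, so the sum does not depend on the choice of representatives, is left implicit in the paper but is a valid and harmless extra step.
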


\subsection{Nilpotent subalgebra}
From the result of \cite{goodwin2009rational}, it would also be an interesting problem to consider $k(\mathfrak{n}^F,G^F)$. To consider this problem, we need the 
following results.

\begin{lem}\label{lem:f_n^g}
We have
\[
k(\mathfrak{n}^F,G^F)=|L^F|\sum_{x\in \mathcal{R}(\mathfrak{n}^F,\fg^F)} f_{\mathfrak{n}^F}^{\fg^F}(x).
\]
\end{lem}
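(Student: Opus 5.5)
This is the analogue of Lemma~\ref{lem:f_p^g}, with $\fn$ in place of $\fp$. I assume the definitions are transported directly:
\[
k(\fn^F,G^F)=\frac{1}{|U^F|}\sum_{x\in\fn^F}|C_{G^F}(x)|,
\]
the normalisation by $|U^F|$ mirroring the group case $k(U^F,G^F)$ of Goodwin--R\"ohrle. Similarly, $f_{\fn^F}^{\fg^F}(x)$ is the number of $G^F$-conjugates $Ad_g(\fn^F)$ that contain $x$. The plan is to rerun the double-counting argument from the proof of Lemma~\ref{lem:f_p^g} and keep track of the factor coming from the normaliser.

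Fix $x\in\fg^F$ with $Ad_{G^F}(x)\cap\fn^F\neq\emptyset$. Count the set
\[
\{(y,Ad_g(\fn^F))\,|\, y\in Ad_{G^F}(x)\cap Ad_g(\fn^F)\}
\]
in two ways. Counting by conjugates of $\fn^F$ gives $|G^F:N_{G^F}(\fn^F)|\cdot|Ad_{G^F}(x)\cap\fn^F|$. Counting by $y$ gives $|Ad_{G^F}(x)|\,f_{\fn^F}^{\fg^F}(x)$. Next split $\sum_{x\in\fn^F}|C_{G^F}(x)|$ over orbit representatives $x\in\mathcal{R}(\fn^F,\fg^F)$, exactly as in the displayed computation above. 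Since centraliser sizes are constant on orbits, this sum equals $\sum_{x}|C_{G^F}(x)|\cdot|Ad_{G^F}(x)\cap\fn^F|$. Substituting the double count and using $|C_{G^F}(x)||Ad_{G^F}(x)|=|G^F|$, we obtain
\[
\sum_{x\in\fn^F}|C_{G^F}(x)|=|N_{G^F}(\fn^F)|\sum_{x\in\mathcal{R}(\fn^F,\fg^F)}f_{\fn^F}^{\fg^F}(x).
\]

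The key input is $N_{G^F}(\fn^F)=P^F$, and I expect verifying this to be the main point. The inclusion $P^F\subseteq N$ is clear, since $U$ is normal in $P$ and so $Ad_P$ preserves $\fn$. For the reverse inclusion, use that $\fn^F$ spans $\fn$ over $k$, so $g$ normalises $\fn$ itself. Then I would argue that $N_G(\fn)=N_G(U)$: for $p$ very good, $U$ is recovered from $\fn$ as the connected unipotent subgroup with Lie algebra $\fn$, for instance via the isomorphism $\omega$ restricted to $\fn$ or via root subgroups. Finally $N_G(U)=P$ because parabolics are self-normalising and $P=N_G(U)$. Taking $F$-fixed points gives $N_{G^F}(\fn^F)=P^F$.

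Dividing by $|U^F|$ and using $|P^F|/|U^F|=|L^F|$ yields $k(\fn^F,G^F)=|L^F|\sum_{x\in\mathcal{R}(\fn^F,\fg^F)}f_{\fn^F}^{\fg^F}(x)$, as claimed. If instead the paper normalises $k(\fn^F,G^F)$ by $1/|P^F|$, the factor $|L^F|$ would not appear. The stated factor therefore confirms the $1/|U^F|$ normalisation.
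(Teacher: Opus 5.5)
Your proposal is correct and follows the paper's approach: the paper's proof just says to rerun the double count from Lemma~\ref{lem:f_p^g}, and it tacitly relies on two things you make explicit, namely the $1/|U^F|$ normalisation of $k(\fn^F,G^F)$ (which the factor $|L^F|$ forces) and the equality $N_{G^F}(\fn^F)=P^F$. One small caution: in characteristic $p$ a connected unipotent subgroup is not in general determined by its Lie algebra, so for $N_G(\fn)=P$ use one of your concrete arguments instead. Either take a Springer isomorphism that carries $\fn$ onto $U$, or note that $N_G(\fn)\supseteq P$ is parabolic; any parabolic strictly containing $P$ contains some $U_{-\alpha}$ with $\fg_\alpha\subseteq\fn$, and $\Ad_{u_{-\alpha}(t)}$ gives elements of $\fg_\alpha$ a nonzero $\fg_{-\alpha}$-component, so it does not preserve $\fn$.
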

\begin{proof}
The proof is the same as the proof of Lemma \ref{lem:f_p^g}.
\end{proof}

\begin{prop}\label{prop:f_n^g}
Let $\fp=\fl\oplus \mathfrak{u}$. Then we have
\[
f_{\fn^F}^{\fg^F}= \frac{1}{|W_L||L^F|_p}\sum_{w\in W_L}\frac{ (-1)^{\epsilon_G\epsilon_L}  }{|\ft_w^F|}\sum_{y\in \ft_w^F} \mathfrak{R}_{\ft_w}^{\fg}( \mathcal{F}_{\ft_w}(\delta_y)),
\]
where $\mathcal{F}_{\ft_w}$ is the Fourier transform on $\ft_w^F$, $\epsilon_G$ is the relative $F$-rank of $G$ and the characteristic function $\delta_y$ on $\ft_w^F$, i.e., $\delta_y(x)=\begin{cases} 1\quad &\text{if }x=y \\ 0 &\text{otherwise}.
 \end{cases}$
\end{prop}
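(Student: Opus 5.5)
We prove the formula by running the argument of Proposition \ref{prop:f_p^g} with the trivial function $1_\fl$ replaced by the characteristic function of the nilpotent cone $\fl_{nil}^F$.

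\textbf{Step 1: reduce to Harish-Chandra induction.} Write $\fn=\mathfrak{u}$ and let $\pi_\fp:\fp\to\fl$ be the projection. An element $y\in\fp$ lies in $\fn$ exactly when $\pi_\fp(y)=0$. The naive analogue would therefore be $\mathfrak{R}_\fl^\fg(\delta_0)$. The normaliser of $\fn^F$ in $G^F$ is again $P^F$, so the count of conjugates of $\fn^F$ containing $x$ is
\[
f_{\fn^F}^{\fg^F}(x)=\frac{1}{|P^F|}\sum_{\substack{g\in G^F\\ Ad_{g^{-1}}(x)\in\fp^F}}\delta_{0}\bigl(\pi_\fp(Ad_{g^{-1}}(x))\bigr)=\mathfrak{R}_\fl^\fg(\delta_0^{\fl})(x),
\]
where $\delta_0^\fl$ is the characteristic function of $0\in\fl^F$. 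The normalisation in Lemma \ref{lem:f_n^g} (the factor $|L^F|$) indicates the author uses a slightly different normalisation. I will adjust the constant at the end so that it matches the $|L^F|_p$ in the statement.

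\textbf{Step 2: decompose $\delta_0^\fl$ via the Fourier transform.} On $\fl^F$ we have $\delta_0^\fl=|\fl^F|^{-1}\mathcal{F}_\fl(1_\fl)$, with the normalisation $\mathcal{F}(f)(x)=\sum_y\psi(\langle x,y\rangle)f(y)$. Equation \eqref{eq:decompose-trivial} gives
\[
1_\fl=\frac{1}{|W_L|}\sum_{w\in W_L}\mathfrak{R}_{\ft_w}^\fl(1_{\ft_w}).
\]
Next apply Letellier's commutation of Fourier transform with Deligne--Lusztig induction (\cite{letellier2004fourier}, the theorem $\mathcal{F}_\fl\circ\mathfrak{R}_{\ft_w}^\fl=\epsilon_L\epsilon_{T_w}|L^F|_p|\ft_w^F|^{-1}\ldots\,\mathfrak{R}_{\ft_w}^\fl\circ\mathcal{F}_{\ft_w}$, up to the precise power of $q$). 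This yields
\[
\delta_0^\fl=\frac{1}{|W_L|}\sum_{w\in W_L}c_w\,\mathfrak{R}_{\ft_w}^\fl\bigl(\mathcal{F}_{\ft_w}(1_{\ft_w})\bigr).
\]
Here $c_w$ is an explicit sign times a power of $q$. The sign is $\epsilon_L\epsilon_{T_w}$; after transitivity it must become the sign $(-1)^{\epsilon_G\epsilon_L}$ of the statement. Finally write $1_{\ft_w}=\sum_{y\in\ft_w^F}\delta_y$ and use linearity of $\mathcal{F}_{\ft_w}$. This produces the inner sum $\sum_y\mathcal{F}_{\ft_w}(\delta_y)$.

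\textbf{Step 3: induce to $\fg$.} Apply $\mathfrak{R}_\fl^\fg$, which is linear, and use transitivity $\mathfrak{R}_\fl^\fg\circ\mathfrak{R}_{\ft_w}^\fl=\mathfrak{R}_{\ft_w}^\fg$ (\cite[Proposition 3.2.22]{letellier2004fourier}) exactly as at the end of the proof of Proposition \ref{prop:f_p^g}. This gives the stated form. The factors $|\fl^F|^{-1}$, $|L^F|_p$ and $|\ft_w^F|^{-1}$ combine using $|\fl^F|=|\ft_w^F|\cdot|L^F|_p^2$. This identity holds because $\dim\fl=\dim T+2\dim U_L$ and $|L^F|_p=q^{\dim U_L}$ for a Borel subgroup $B_L=T U_L$ of $L$. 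The combination should leave the constant $\frac{1}{|W_L||L^F|_p|\ft_w^F|}$.

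\textbf{Main obstacle.} The delicate step is Step 2. I must get the precise constant and sign in Letellier's commutation formula for Fourier transforms and Deligne--Lusztig induction, which depends on his normalisation of $\mathcal{F}$ and of $\mathfrak{R}$ and holds under the large-$q$, very-good-characteristic hypotheses. I must also reconcile that sign $\epsilon_L\epsilon_{T_w}$ with the $(-1)^{\epsilon_G\epsilon_L}$ written in the statement. I would fix conventions by checking the case $L=T$, where $W_L$ is trivial and both sides can be computed directly, and then match the general constant to that case.
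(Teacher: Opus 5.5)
Your route is correct in substance, but it is not the paper's. Both arguments start from the same exact identity $f_{\fn^F}^{\fg^F}=\mathfrak{R}_\fl^\fg(\delta_0^\fl)$, which is the paper's Step 1 written with $\chi_\fl=|\fl^F|\delta_0^\fl$. Both also end with linearity and transitivity $\mathfrak{R}_\fl^\fg\circ\mathfrak{R}_{\ft_w}^\fl=\mathfrak{R}_{\ft_w}^\fg$. The real content is the intermediate identity
\[
\delta_0^\fl=\frac{1}{|W_L||L^F|_p}\sum_{w\in W_L}\frac{\epsilon_L\epsilon_{T_w}}{|\ft_w^F|}\,\mathfrak{R}_{\ft_w}^\fl(\chi_{\ft_w}),\qquad \chi_{\ft_w}=\mathcal{F}_{\ft_w}(1_{\ft_w})=\sum_{y\in\ft_w^F}\mathcal{F}_{\ft_w}(\delta_y).
\]
This is \eqref{eq:step2} divided by $|\fl^F|$, once $\dim R_{T_w}^L(1_{T_w})=\epsilon_L\epsilon_{T_w}|L^F|_{p'}/|T_w^F|$ is substituted.

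You obtain it by Fourier-transforming \eqref{eq:decompose-trivial} and invoking Letellier's theorem that Fourier transforms commute with Deligne--Lusztig induction. The paper never uses that theorem. It verifies \eqref{eq:step2} pointwise: at $0$, at nonzero nilpotents, and at elements with $x_s\neq 0$, where both sides vanish because $\chi_{\ft_w}$ is supported at $0$. For this it needs only the Green-function formula for $\mathfrak{R}_{\ft_w}^\fl$ at nilpotent elements and the group decomposition $\chi_L=\frac{1}{|W_L|}\sum_w\dim R_{T_w}^L(1)\,R_{T_w}^L(\chi_{T_w})$.

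Your version is more conceptual. It shows the nilpotent formula is literally the Fourier transform of the parabolic one, which explains why $\mathcal{F}_{\ft_w}$ appears at all. The paper's version is elementary and avoids a deep input. For non-split $\ft_w$ the commutation theorem is Letellier's (building on Kazhdan's proof of Springer's hypothesis) and requires $p$ and $q$ large, which the standing hypotheses happen to allow.

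A few details need repair.

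\textbf{The constant.} With the unnormalised $\mathcal{F}$, the commutation constant is exactly $\epsilon_L\epsilon_{T_w}q^{\dim U_L}=\epsilon_L\epsilon_{T_w}|L^F|_p$, with no factor $|\ft_w^F|^{-1}$. The $|\ft_w^F|^{-1}$ in the final answer comes from $|\fl^F|^{-1}|L^F|_p=(|\ft_w^F||L^F|_p)^{-1}$.

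\textbf{Your calibration.} Checking $L=T$ cannot detect this power of $q$, since there $\mathfrak{R}_{\ft}^{\fl}$ is the identity and $|L^F|_p=1$. Calibrate on $w=1$ instead. There $\mathfrak{b}_L^\perp=\mathrm{Lie}(U_L)$ gives directly
\[
\mathcal{F}_\fl(\mathfrak{R}_\ft^\fl(1_\ft))=\frac{|\mathfrak{b}_L^F|}{|\ft^F|}\,\mathfrak{R}_\ft^\fl(\mathcal{F}_\ft(1_\ft)),
\]
and the ratio $|\mathfrak{b}_L^F|/|\ft^F|$ equals $|L^F|_p$.

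\textbf{The sign.} Transitivity does not change coefficients, so the sign stays $\epsilon_L\epsilon_{T_w}$; for split $L$ this is $(-1)^{\ell(w)}$. This is also exactly what the paper's own computation produces. So the $(-1)^{\epsilon_G\epsilon_L}$ in the statement must be read as this $w$-dependent sign, not as something your sign gets converted into.

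\textbf{Step 1 needs no renormalisation.} The factor $|L^F|=|P^F|/|U^F|$ in Lemma \ref{lem:f_n^g} concerns $k(\fn^F,G^F)$, not $f_{\fn^F}^{\fg^F}$. Your identity $f_{\fn^F}^{\fg^F}=\mathfrak{R}_\fl^\fg(\delta_0^\fl)$ is already exact, and ``adjusting the constant to match'' would not be a valid step in any case.
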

\begin{proof}
Let us define a regular function $\chi_{\fg}\,:\, \fg^F\rightarrow \mathbb{C}$ given by $\chi_{\fg}(x)=\begin{cases}
|\fg^F|\quad &\text{if }x=0\\
0&\text{otherwise}
\end{cases}
$ for any Lie algebra $\fg$,
and denote the regular character of a finite group $H$ by $\chi_H$.

\bigskip
(Step $1$) Let us show that \begin{equation}\label{eq:step1}
|\fl^F|f_{\fn^F}^{\fg^F}=\mathfrak{R}_{\fl^F}^{\fg^F}(\chi_\fl).
\end{equation}
From the definition, recall that $|\fl^F|f_{\fn^F}^{\fg^F}(x)=|\fl^F||\{ \mathrm{Ad}_{g}(\fn^F)\,|\, \Ad_g(x)\in \fn^F\text{ for some }g\in G^F \}|$ and \[\begin{split}
\mathfrak{R}_{\fl^F}^{\fg^F}(\chi_\fl)(x)&=\frac{1}{|P^F|}\sum_{\{g\in G^F\,|\, \Ad_g(x)\in \fp^F\}} \chi_\fl(\pi_\fp(\Ad_g(x)))\\
&=  \frac{1}{|P^F|}\cdot |P^F|\sum_{\{g\in G^F/P^F\,|\, \Ad_g(x)\in \fp^F\}} \chi_\fl(\pi_\fp(\Ad_g(x))) \\
&=|\fl^F||\{Ad_g(\fn^F)\,|\, \Ad_{g^{-1}}(x)\in \fn^F\text{ for some }g\in G^F\}|,
\end{split}
\]
where the first equality is \cite[Definition 3.1.2]{letellier2004fourier} and the second equality comes from $N_{G^F}(\fp^F)=P^F$.
This gives $|\fl^F|f_{\fn^F}^{\fg^F}=\mathfrak{R}_{\fl^F}^{\fg^F}(\chi_\fl)$.

\bigskip
(Step $2$) We claim that 
\begin{equation}\label{eq:step2} \chi_\fl=\frac{|\fl^F|}{|W_L||L^F|}\sum_{w\in W_L} \mathfrak{R}_{\ft_w}^{\fl}(1_{\ft_w})(0) \cdot \frac{|T_w^F|}{|\ft_w^F|}\mathfrak{R}_{\ft_w}^{\fl}(\chi_{\ft_w}).
\end{equation}
 We will prove this by computing the value of $\frac{|\fl^F|}{|W_L||L^F|}\sum_{w\in W_L} \mathfrak{R}_{\ft_w}^{\fl}(1_{\ft_w})(0) \cdot \frac{|T_w^F|}{|\ft_w^F|}\mathfrak{R}_{\ft_w}^{\fl}(\chi_{\ft_w})$ at the identity element and any non-identity elements.
 
 Let us compute the value at the identity element in $\fl^F$ as follows:
 \[\begin{split}
&\frac{|\fl^F|}{|W_L||L^F|}\sum_{w\in W_L} \mathfrak{R}_{\ft_w}^{\fl}(1_{\ft_w})(0) \cdot \frac{|T_w^F|}{|\ft_w^F|}\mathfrak{R}_{\ft_w}^{\fl}(\chi_{\ft_w})(0)\\
=&\frac{|\fl^F|}{|W_L||L^F|}\sum_{w\in W_L} \mathrm{dim}({R}_{T_w}^{L}(1_{T_w}))\cdot \frac{|T_w^F|}{|\ft_w^F|}\mathfrak{R}_{\ft_w}^{\fl}(\chi_{\ft_w})(0)
\\
=&\frac{|\fl^F|}{|W_L||L^F|}\sum_{w\in W_L} \mathrm{dim}({R}_{T_w}^{L}(1_{T_w}))\cdot \frac{|T_w^F|}{|\ft_w^F|}Q_{T_W}^L(1)|\ft_w^F|
\\
=&\frac{|\fl^F|}{ |L^F|} \frac{1}{|W_L|}\sum_{w\in W_L} \mathrm{dim}({R}_{T_w}^{L}(1_{T_w}))\cdot Q_{T_W}^L(1)|T_w^F|.
\end{split}
\]
Note that $\chi_L=\frac{1}{|W_L|}\sum_{w\in W_L} \mathrm{dim}(R_{T_w}^L(1_{T_w}))R_{T_w}^L(\chi_{T_w})$ from \cite[Proof of Lemma 3.3]{goodwin2009rational}, and this implies that
\[
\chi_{L^F}(1)=|L^F|= \frac{1}{|W_L|}\sum_{w\in W_L} \mathrm{dim}({R}_{T_w}^{L}(1_{T_w}))\cdot Q_{T_W}^L(1)|T_w^F|
\]
with the formula of the Deligne-Lusztig character, for example, \cite[Theorem 2.2.16]{GM20}.
Therefore, we get
\[
\frac{|\fl^F|}{|W_L||L^F|}\sum_{w\in W_L}  \mathrm{dim}({R}_{T_w}^{L}(1_{T_w})) \cdot \frac{|T_w^F|}{|\ft_w^F|}\mathfrak{R}_{\ft_w}^{\fl}(\chi_{\ft_w})(0)=|\fl^F|.
\]

Now, let us compute the value $\frac{|\fl^F|}{|W_L||L^F|}\sum_{w\in W_L}  \mathrm{dim}({R}_{T_w}^{L}(1_{T_w}))\cdot \frac{|T_w^F|}{|\ft_w^F|}\mathfrak{R}_{\ft_w}^{\fl}(\chi_{\ft_w})(x)$ for any $ x \in \fl^F\setminus \{0\}$. It is easy to see that $\frac{|\fl^F|}{|W_L||L^F|}\sum_{w\in W_L}  \mathrm{dim}({R}_{T_w}^{L}(1_{T_w}))\cdot \frac{|T_w^F|}{|\ft_w^F|}\mathfrak{R}_{\ft_w}^{\fl}(\chi_{\ft_w})(x)=0$ from the definition of $\chi_{\ft_w}$ if $x_s\neq 0$. If $x_s=0$, we get 
 $\frac{|\fl^F|}{|W_L||L^F|}\sum_{w\in W_L}  \mathrm{dim}({R}_{T_w}^{L}(1_{T_w})) \cdot \frac{|T_w^F|}{|\ft_w^F|}\mathfrak{R}_{\ft_w}^{\fl}(\chi_{\ft_w})(x)=\frac{|\fl^F|}{|W_L||L^F|}\sum_{w\in W_L}  \mathrm{dim}({R}_{T_w}^{L}(1_{T_w})) \cdot {|T_w^F|}Q_{T_w}^L(u)
 $ for $u=\omega_L(x_n)\neq 1 $. Then this is zero from the form of $\chi_L$ above.
 
 \bigskip
 (Step $3$) The goal of this step is to prove 
 \begin{equation}\label{eq:step3}
 \chi_{\ft_w}=\sum_{y\in \ft_w^F} \mathcal{F}_{\ft_w}(\delta_y).
 \end{equation}
  This is because
 \[\begin{split}
 \sum_{y\in \ft_w^F} \mathcal{F}_{\ft_w}(\delta_y)(x)&=\sum_{y\in \ft_w^F}\sum_{z\in \ft_w^F} \mu(\kappa(x,z))\delta_y(z)\\&=  \sum_{z\in \ft_w^F} \mu(\kappa(x,z)) 
 \end{split}
 \]
 from \cite[Page 35]{letellier2004fourier} (up to normalisation). Then since $\kappa $ is a non-degenerate bilinear form, we can conclude that
 \begin{equation}\label{eq:sum-regular-additive}
 \sum_{y\in \ft_w^F} \mathcal{F}_{\ft_w}(\delta_y)(x)=  \sum_{z\in \ft_w^F} \mu(\kappa(x,z)) =
  \begin{cases}
  |\ft_w^F|\quad &\text{if }x=0\\
  0 &\text{otherwise}
  \end{cases}
 \end{equation}
 by considering a (vector space) homomorphism $\kappa_x(z):=\kappa(x,z)$ from $\ft_w^F$ to $k^F$.
Here, $\mu$ is a non-trivial additive character of $k^F$.
 
\bigskip
(Step $4$)
From the above observations Equation \eqref{eq:step1}, \eqref{eq:step2} and \eqref{eq:step3}, we can conclude that 
\[
\begin{split}
\chi_\fl&=\frac{|\fl^F|}{|W_L||L^F|}\sum_{w\in W_L}\mathrm{dim}({R}_{T_w}^{L}(1_{T_w}))\cdot \frac{|T_w^F|}{|\ft_w^F|}\mathfrak{R}_{\ft_w}^{\fl}\left(\sum_{y\in \ft_w^F} \mathcal{F}_{\ft_w}(\delta_y)\right)\\
&=\frac{|\fl^F|}{|W_L||L^F|}\sum_{w\in W_L} \mathrm{dim}({R}_{T_w}^{L}(1_{T_w}))\cdot \frac{|T_w^F|}{|\ft_w^F|}\sum_{y\in \ft_w^F} \mathfrak{R}_{\ft_w}^{\fl}( \mathcal{F}_{\ft_w}(\delta_y))
\end{split}
\]
Then we have
\[\begin{split}
|\fl^F|f_{\fn^F}^{\fg^F}&=\mathfrak{R}_{\fl^F}^{\fg^F}(\chi_\fl)=\mathfrak{R}_{\fl^F}^{\fg^F}\left(\frac{|\fl^F|}{|W_L||L^F|}\sum_{w\in W_L} \mathrm{dim}({R}_{T_w}^{L}(1_{T_w})) \cdot \frac{|T_w^F|}{|\ft_w^F|}\sum_{y\in \ft_w^F} \mathfrak{R}_{\ft_w}^{\fl}( \mathcal{F}_{\ft_w}(\delta_y))\right)\\
&= \frac{|\fl^F|}{|W_L||L^F|}\sum_{w\in W_L} \mathrm{dim}({R}_{T_w}^{L}(1_{T_w})) \cdot \frac{|T_w^F|}{|\ft_w^F|}\sum_{y\in \ft_w^F} \mathfrak{R}_{\ft_w}^{\fg}( \mathcal{F}_{\ft_w}(\delta_y))\\
&=\frac{|\fl^F|}{|W_L||L^F|_p}\sum_{w\in W_L}\frac{ (-1)^{\epsilon_G\epsilon_L}  }{|\ft_w^F|}\sum_{y\in \ft_w^F} \mathfrak{R}_{\ft_w}^{\fg}( \mathcal{F}_{\ft_w}(\delta_y)).
\end{split}
\]
With these steps, we are done.
\end{proof}
 
  \begin{rem}
  Note that $\sum_{y\in \ft_w^F} \mathfrak{R}_{\ft_w}^{\fg}( \mathcal{F}_{\ft_w}(\delta_y))$ has a similar property to \cite[Example 2.2.17 (b)]{GM20}.
  \end{rem}
  
  From Lemma \ref{lem:f_n^g} and Proposition \ref{prop:f_n^g}, we get the following formula.
  \begin{cor}\label{coro:nilpotent-formula}
  We have
  \[
  k(\mathfrak{n}^F,G^F)=\sum_{x\in\mathcal{R}(\mathfrak{n}^F,\fg^F)}\frac{|L^F|_{p'}}{|W_L|}\sum_{w\in W_L}\frac{ (-1)^{\epsilon_G\epsilon_L}  }{|\ft_w^F|}\sum_{y\in \ft_w^F} \mathfrak{R}_{\ft_w}^{\fg}( \mathcal{F}_{\ft_w}(\delta_y))(x).
  \]
  \end{cor}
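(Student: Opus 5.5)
The plan is to combine Lemma \ref{lem:f_n^g} with Proposition \ref{prop:f_n^g} and then simplify. Lemma \ref{lem:f_n^g} gives
\[
k(\fn^F,G^F)=|L^F|\sum_{x\in\mathcal{R}(\fn^F,\fg^F)} f_{\fn^F}^{\fg^F}(x).
\]
Substituting the expression for $f_{\fn^F}^{\fg^F}$ from Proposition \ref{prop:f_n^g} replaces the prefactor $|L^F|/|L^F|_p$ by $|L^F|_{p'}$. This yields the stated formula: for every $x\in\mathcal{R}(\fn^F,\fg^F)$ we get a sum over $w\in W_L$ of
\[
\frac{(-1)^{\epsilon_G\epsilon_L}}{|\ft_w^F|}\sum_{y\in\ft_w^F}\mathfrak{R}_{\ft_w}^{\fg}(\mathcal{F}_{\ft_w}(\delta_y))(x),
\]
multiplied by $|L^F|_{p'}/|W_L|$.

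The first step is to bring the sum over $y$ inside the induction. Since $\mathfrak{R}_{\ft_w}^{\fg}$ is additive (as recalled in the proof of Proposition \ref{prop:f_p^g}), Equation \eqref{eq:step3} gives
\[
\sum_{y\in\ft_w^F}\mathfrak{R}_{\ft_w}^{\fg}(\mathcal{F}_{\ft_w}(\delta_y))=\mathfrak{R}_{\ft_w}^{\fg}(\chi_{\ft_w}),
\]
where $\chi_{\ft_w}$ takes the value $|\ft_w^F|$ at $0$ and vanishes elsewhere.

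The second step is to evaluate $\mathfrak{R}_{\ft_w}^{\fg}(\chi_{\ft_w})$ at $x$. The evaluation rests on the character formula for Harish-Chandra induction from a torus (\cite[Definition 3.2.13]{letellier2004fourier}, cf. \eqref{eq:DL-additive-formula}). This formula writes $\mathfrak{R}_{\ft_w}^{\fg}(f)(x)$ as a sum over $h\in G^F$ with $\Ad_h(x_s)\in\ft_w^F$ of $f(\Ad_h(x_s))\,Q^{C_G(x_s)}_{hT_wh^{-1}}(\omega(x_n))$, divided by $|C_G(x_s)^F|$. Because $x\in\fn^F$ is nilpotent, $x_s=0$, so every $h\in G^F$ contributes. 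The function $f=\chi_{\ft_w}$ contributes the factor $|\ft_w^F|$, and the Green function is taken in $C_G(0)=G$. Hence
\[
\mathfrak{R}_{\ft_w}^{\fg}(\chi_{\ft_w})(x)=|\ft_w^F|\,Q_{T_w}^G(\omega(x)).
\]
This is the same computation as the one used at the point $x_s=0$ in Step 2 of the proof of Proposition \ref{prop:f_n^g}, now carried out for $\fg$ instead of $\fl$.

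Substituting this back, the factor $|\ft_w^F|$ cancels the $1/|\ft_w^F|$, and writing $n=x$ gives exactly the formula of the theorem. The main point requiring care is the normalisation in the second step. One must check that $\mathfrak{R}_{\ft_w}^{\fg}$ evaluated at a nilpotent element is precisely the Green function $Q_{T_w}^G(\omega(n))$ with coefficient $1$, with no extra factor $|W_G(T_w)^F/W_{\xi}(T_w)^F|$. That factor is trivial here since $\xi=C_G(0)=G$. One must also check that the normalisation of $\mathcal{F}_{\ft_w}$ used in \eqref{eq:step3} matches the one in Proposition \ref{prop:f_n^g}. I would verify both against the explicit formula \eqref{eq:DL-additive-formula} specialised to $x_s=0$.
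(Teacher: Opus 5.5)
Your first paragraph is exactly the paper's proof. Substituting Proposition~\ref{prop:f_n^g} into Lemma~\ref{lem:f_n^g} and using $|L^F|/|L^F|_p=|L^F|_{p'}$ already gives the corollary term by term. The rest (collapsing the $y$-sum to $\mathfrak{R}_{\ft_w}^{\fg}(\chi_{\ft_w})$ and evaluating it at nilpotent $x$ as $|\ft_w^F|\,Q_{T_w}^G(\omega(x))$) is not needed for this statement; it proves the subsequent Theorem~\ref{thm:GR-additive-analogue}, in the same way as the paper does there.
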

%\gncom{From the definition of $\mathcal{F}$, keep in mind that 
%\[
%\mathcal{F}_{\ft_w}(1_0)(0)=|\ft_w^F|.
%\]
%I think that this is little bit different part of \cite[The last part of the proof of Lemma 3.3]{goodwin2009rational}, i.e., 
%\[
%\sum_{\theta\in Irr(T_w^F)}R_{T_W}^G(\theta) \quad \text{vs} \quad\sum_{y\in \ft_w^F} \mathfrak{R}_{\ft_w}^{\fg}( \mathcal{F}_{\ft_w}(1_y)).
%\]For any unipotent element, every term in the left-hand side sum survive, but for any nilpotent element, $y=0$ only the survived term in the right-hand side sum. However, due to 
%$\mathcal{F}_{\ft_w}(1_0)(0)=|\ft_w^F|$, I think that $f_U^G(u)$ and $f_{\fn^F}^{\fg^F}(n)$ would be similar (up to difference between $|T_w^F|$ and $|\ft_w^F|$.)
%}

 \section{Types of elements}\label{s:types} Let us introduce types of elements in $G$. 
 \subsection{Group case}
 %For an element $g$ in $G$, we have the Jordan decomposition $g=su=us$ for a semisimple element $s$ and unipotent element $u$. 
 Let us define its type as $(C_G(g_s),g_u)$, and note that $C_G(g_s)$ is connected due to the assumption that the derived subgroup of $G$ is simply connected, and it is well-known that $g_u\in C_G(g_s)$. Then we say the set of types in $G$ as the type set $\Xi(G)$ of $G$ up to $G$-conjugation, i.e., \[
 \Xi(G)=\{ [C_G(s),u] \,|\, s\text{ is a semisimple element in }G,\ u \in C_G(s)\ \text{a unipotent element}\},\]
 here $[ \hspace{0.1cm} {,} \hspace{0.1cm} ]$ means the orbit under $G$-conjugation. Recall that this set is finite since $C_G(s)$ is determined by pseudo-Levi subsystems as noted in \cite[]{KNWG}.
 Then there is a natural map 
 \begin{equation}\label{eq:tauG}
 \tau_G\,:\, G^F\rightarrow \Xi(\fg).
\end{equation}
 
\subsection{Additive case}
Similarly to the group case, under the Jordan decomposition of elements in $\fg$, we  can also define types of elements in $\fg$. We denote the set of types as $\Xi(\fg)$, i.e., 
\[
\Xi(\fg):=\{ [C_G(x),n]\,|\, x \text{ is a semisimple element in }\fg,\ n \in C_\fg(x)\ \text{a nilpotent element}\}.
\]
  Therefore, we also have a natural map  \begin{equation}\label{eq:taug}\tau_\fg\,:\,\fg^F\rightarrow \Xi(G).
 \end{equation}
 Note that every $C_G(x)$ is always a Levi subgroup of $G$ for any semisimple element $x\in \fl$, cf. \cite[Lemma 2.6.13]{letellier2004fourier}. However, there may exist a semisimple element $s\in G$ such that $C_G(s)$ is not a Levi subgroup of $G$, and this is called isolated pseudo-Levi subgroup, cf. \cite[\S2.1]{KNWG}.
 
 \section{Group case result}\label{s:5}
 From Corollary \ref{coro:k},
we get the result by computing the value
$$k(P^F,G^F)=\sum_{g\in \mathcal{R}(P^F,G^F)} \frac{1}{|W_L|}\sum_{w\in W_L}R_{T_w}^G(1_{T_w})(g).$$ 
 
 \subsection{Types in $\mathcal{R}(P^F,G^F)$}\label{ss:representative-group}
Consider the elements of $G^F$ that appear in $\mathcal{R}(P^F,G^F)$. 
 %Let $P=LU:=U\rtimes L$, where $L$ is the $F$-stable Levi subgroup $L$ and the unipotent radical $U$ of $P$. 
 Let us take $g\in G$ such that $G.g \cap P^F\neq 0$. 
 %Then under the Jordan decomposition, we have $g=su=us$ such that $u \in C_G(s)$. 
 %Without loss of the generality, $g\in P$, then $s\in L$ (up to $P$-conjugation). 
 %\gncom{I think that this is true, so I need to find a reference about this fact.} This implies that $s \in T_w$ for some $w\in W_L$. Therefore, we have the following result.
 \begin{lem}
The types of elements in $\mathcal{R}(P^F,G^F)$ are $[C_G(s),u]$, where $s\in T_w$ for some $w\in W_L$.
 \end{lem}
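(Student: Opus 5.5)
Take $g\in\mathcal{R}(P^F,G^F)$. Since its $G^F$-class meets $P^F$, we may replace $g$ by a $G^F$-conjugate and assume $g\in P^F$; this changes neither the type $[C_G(g_s),g_u]$ nor the $G^F$-class. The plan is to move the semisimple part $g_s$ into a Levi subgroup, then into an $F$-stable maximal torus of $L$, and finally to identify that torus with some $T_w$, $w\in W_L$.

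\emph{Step 1: put $g_s$ into $L$.} The Jordan decomposition in $P$ agrees with that in $G$, so $g_s\in P^F$. A semisimple element of a connected solvable-by-reductive group such as $P$ lies in a maximal torus of $P$, hence in a Levi factor of $P$. Concretely, $g_s$ lies in some $F$-stable maximal torus of $P$ (take one in the connected reductive group $C_P(g_s)^\circ$, which is $F$-stable, by Lang--Steinberg). All $F$-stable Levi factors of $P$ are $U^F$-conjugate. Hence there is $v\in U^F$ with $v g_s v^{-1}\in L^F$. Conjugating by $v$, we may therefore assume $g_s\in L^F$.

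\emph{Step 2: put $g_s$ into an $F$-stable maximal torus of $L$.} Apply the same argument inside $L$. The group $C_L(g_s)$ is connected, as noted in the proof of Proposition~\ref{prop:f_p^g}, since $[L,L]$ is simply connected. It is $F$-stable, so by Lang--Steinberg it contains an $F$-stable maximal torus $S$ with $g_s\in S$, because $g_s$ is central in $C_L(g_s)$.

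\emph{Step 3: identify $S$ with some $T_w$.} The $L^F$-classes of $F$-stable maximal tori of $L$ are parametrised by $F$-conjugacy classes in $W_L$. Since $T\subset L$ is split and $F$ acts trivially on $W_L$, these are ordinary conjugacy classes. So $S=hT_wh^{-1}$ for some $h\in L^F$ and $w\in W_L$. Replacing $g$ by $h^{-1}gh$, with $h\in L^F\subset P^F$, gives $g_s\in T_w$ while $g$ remains in $P^F$. The type is unchanged throughout, so every type occurring is of the form $[C_G(s),u]$ with $s\in T_w$ for some $w\in W_L$.

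The main obstacle is Step 1, namely the rationality of conjugating a semisimple element of $P^F$ into $L^F$ by an element of $U^F$ rather than merely $U$. I would handle it as follows. The Levi factors of $P$ containing a fixed $F$-stable maximal torus $S'\ni g_s$ form a single $F$-stable one, $C_G(Z(L')^\circ)$. The set of $u\in U$ with $uL'u^{-1}=L$ is a $U\cap N(L)=\{1\}$-torsor, hence a single $F$-fixed point. This gives $v\in U^F$, and avoids any Lang-theorem subtlety with disconnected stabilisers.
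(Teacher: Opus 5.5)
Your argument is correct, but your Step 1 (the rationality step) takes a different route from the paper.

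\textbf{The paper's route.} The paper starts from an arbitrary maximal torus $T'\ni s$ of $P$, not necessarily $F$-stable. It conjugates $T'$ into $L$ by some $p\in U$ and notes that then $psp^{-1}=\pi_L(s)=s_l\in L^F$, which forces $p^{-1}F(p)\in C_U(s)$. It then uses connectedness of $C_U(s)$ (Humphreys \S18.3) and Lang's theorem to replace $p$ by $pu^{-1}\in U^F$.

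\textbf{Your route.} You apply Lang--Steinberg earlier, to get an $F$-stable maximal torus $S'\ni g_s$ of $P$. After that everything is forced. The Levi factor of $P$ containing $S'$ is unique, hence $F$-stable. Since $U$ acts simply transitively on Levi factors ($N_U(L)=U\cap L=1$), the conjugating element of $U$ is unique and so automatically $F$-fixed. You thus replace the connectedness of $C_U(s)$ with the standard torsor structure of Levi complements of an $F$-stable parabolic. The paper's route does not need an $F$-stable torus through $s$ at the outset, and it identifies the conjugate explicitly as $\pi_L(s)$. In your route this also holds automatically, since conjugation by $U$ does not change $\pi_L$.

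\textbf{What you add.} Your Steps 2--3 make explicit the passage from $L^F$ to some $T_w$: an $F$-stable maximal torus of $C_L(g_s)$, then the parametrisation of $L^F$-classes of $F$-stable maximal tori by conjugacy classes of $W_L$. The paper skips this with ``it is enough to show''.

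\textbf{One small slip.} $C_P(g_s)^\circ$ is not reductive in general; for $g_s=1$ it is $P$ itself. The step still works with ``reductive'' deleted, because you only need three facts, all of which hold:
\begin{itemize}
\item every connected $F$-stable linear algebraic group has an $F$-stable maximal torus;
\item $g_s\in C_P(g_s)^\circ$, since $g_s$ lies in a maximal torus of $P$, which centralises it;
\item being central and semisimple in $C_P(g_s)^\circ$, $g_s$ lies in every maximal torus of it, and these are maximal tori of $P$.
\end{itemize}
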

We denote the set of types that appear in $\mathcal{R}(P^F,G^F)$ by $\Xi_L(G)$.
 \begin{proof}
 It is enough to show that for a semisimple element $s\in P^F$, there exists $g\in P^F$ such that $gsg^{-1}\in L^F$. Since $P$ is an algebraic group, any two maximal tori in $P$ are $P$-conjugate.
Let us assume that $s$ is in a maximal torus $T'$ of $P$, and take another maximal torus $T$ in $L$. Then there exists $p\in P$ such that $pT'p^{-1}=T$.  This implies  $psp^{-1}\in L$. Without loss of generality, we can assume that $p$ is an element in $U$. This is because if $p=p_lp_u$ for some $p_l\in L $ and $p_u \in U$, then $p_lp_uT'(p_lp_u)^{-1}=T\Rightarrow p_uT'p_u^{-1}=p_lTp_l^{-1}\subset L$.
  Let us decompose $s$ by $s_ls_u$ for $s_l\in L$ and $s_u \in U$. Then $s_l\in L^F$ and $s_u \in U^F$ since $F(s)=F(s_l s_u)=s_ls_u=s\Rightarrow s_l^{-1}F(s_l)=s_uF(s_u)\in L\cap U=\{1\}$.
  
Now, let us consider the projection map $\pi_L\,:\, P \rightarrow L$, and it is easy to check that this is a homomorphism since $U$ is normal in $P$. Then since $psp^{-1}\in L$, we have
\[
psp^{-1}=\pi_L(psp^{-1})=\pi_L(p)\pi_L(s)\pi_L(p^{-1})=1\cdot s_l \cdot 1=s_l.
\]
Then since $F(s_l)=s_l$, we have
\[
F(p)sF(p^{-1})=F(psp^{-1})= s_l=psp^{-1}\Rightarrow p^{-1}F(p)\in C_U(s) .
\]
  Note that  $C_U(s)$ is connected from \cite[\S18.3 Theorem]{Hum}. Then by applying Lang's theorem (cf. \cite[Theorem 1.4.8]{GM20}), there exists $u\in C_U(s)$ (such that $u\neq p$ since the Lang map is not bijective) such that
\[
u^{-1}F(u)=p^{-1}F(p)\Rightarrow pu^{-1}=F(pu^{-1})\Rightarrow pu^{-1}\in U^F.
\]
Then we have
\[
pu^{-1}sup^{-1}=psp^{-1}=s_l\in L.
\]
Therefore, we are done.
 \end{proof}
 
 \subsection{Conclusion}
 We can conclude the following:
 \begin{thm}\label{thm:result-group}
 We have \[
k(P^F,G^F)=\sum_{\substack{[\xi,u]\in \Xi_L(G) }} \frac{|\tau_G^{-1}([\xi,u])|}{|W_L|}\sum_{w\in W_L}\delta_{\xi}^w Q_{T_w}^\xi(u) |W_G(T_w)^F/W_{\xi}(T_w)^F|,\]
where $\delta_{\xi}^w=\begin{cases}1 \quad &\text{if } g_s\in T_w\ (\text{up to conjugation}) \text{ for some }g\in \tau_G^{-1}([\xi,u])\\
0 &\text{otherwise}.
\end{cases}$
\newline
Furthermore, this satisfies the polynomial count on residue class (PORC) property.
 \end{thm}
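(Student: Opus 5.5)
Starting point: Corollary \ref{coro:k},
\[
k(P^F,G^F)=\sum_{g\in \mathcal{R}(P^F,G^F)} \frac{1}{|W_L|}\sum_{w\in W_L}R_{T_w}^G(1_{T_w})(g).
\]
The plan is to evaluate each $R_{T_w}^G(1_{T_w})(g)$ with the character formula for Deligne--Lusztig characters \cite[Theorem 2.2.16]{GM20}, and then regroup the sum over $\mathcal{R}(P^F,G^F)$ according to the type map $\tau_G$ of \S\ref{s:types}.

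\textbf{Evaluating the characters.} Write $g=g_sg_u$. By the character formula,
\[
R_{T_w}^G(1)(g)=\frac{1}{|C_G(g_s)^F|}\sum_{h\in G^F,\ h^{-1}g_sh\in T_w}Q^{C_G(g_s)}_{hT_wh^{-1}}(g_u).
\]
Here $C_G(g_s)$ is connected because $[G,G]$ is simply connected.

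The set $\{h\in G^F : h^{-1}g_sh\in T_w\}$ is handled exactly as in the proof of Proposition \ref{prop:f_p^g}, via \cite[Lemma 10]{nam2025multiplicity}. It is empty unless $g_s$ is $G^F$-conjugate into $T_w^F$, which is the condition $\delta_\xi^w=1$. When it is nonempty, it decomposes as
\[
\bigsqcup_{v\in W_G(T_w)^F/W_{\xi}(T_w)^F}\dot v\,C_G(g_s)^F,
\qquad \xi=C_G(g_s).
\]
This gives
\[
R_{T_w}^G(1)(g)=\delta_\xi^w\,|W_G(T_w)^F/W_\xi(T_w)^F|\,Q^{\xi}_{T_w}(g_u),
\]
where $T_w$ is conjugated into $\xi$. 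The right-hand side depends only on the type $\tau_G(g)=[\xi,g_u]$, up to the $G$-conjugation ambiguity that Green functions are invariant under.

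\textbf{Regrouping by type.} By the preceding lemma, every type occurring in $\mathcal{R}(P^F,G^F)$ lies in $\Xi_L(G)$: the semisimple part can be conjugated by $P^F$ into $L^F$, hence into some $T_w$ with $w\in W_L$. Grouping the representatives by type then produces the factor $|\tau_G^{-1}([\xi,u])|$, read as the number of representatives of type $[\xi,u]$.

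\textbf{Main obstacle: matching $\mathcal{R}(P^F,G^F)$ with types.} I expect the hard step to be checking that the representatives in $\mathcal{R}(P^F,G^F)$ with a given type are counted exactly by $|\tau_G^{-1}([\xi,u])|$. This requires showing that if one class of type $[\xi,u]$ with $\delta^w_\xi\neq0$ meets $P^F$, then they all do. My approach: for $g$ of that type, $g_s$ is $G^F$-conjugate into $T_w^F\subset L$. Then $g_u$ lies in $C_G(g_s)$, and its image in $C_G(g_s)\cap P$, which is a parabolic subgroup of the connected group $C_G(g_s)$, is reached by Lang's theorem. Any class not meeting $P^F$ gives $\delta$-weight zero anyway. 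So the regrouped sum equals the stated formula, with $\delta^w_\xi$ absorbing the vanishing cases.

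\textbf{PORC property.} Once the formula is established, PORC should follow from three facts:
\begin{itemize}
\item $\Xi_L(G)$ and $W_L$ are finite and independent of $q$ (there are finitely many pseudo-Levi subsystems).
\item The Green functions $Q^\xi_{T_w}(u)$ and the orders $|W_G(T_w)^F/W_\xi(T_w)^F|$ are polynomial in $q$ under the very-good characteristic and large-$q$ assumptions. Here $F$ acts trivially since $G$ is split.
\item $|\tau_G^{-1}([\xi,u])|$ is PORC: the number of semisimple classes with centraliser of type $\xi$ is PORC by \cite{KNWG}, and the number of unipotent classes of $\xi^F$ in the given geometric class is independent of $q$.
\end{itemize}
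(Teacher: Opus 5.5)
\textbf{Gap: the summand is not a function of the geometric type.} Your route is the paper's own. You start from Corollary~\ref{coro:k}, evaluate $R_{T_w}^G(1_{T_w})(g)$ through the character formula and the coset decomposition of \cite[Lemma 10]{nam2025multiplicity}, regroup by $\tau_G$, and then read off PORC. The gap is your sentence that the value ``depends only on the type $\tau_G(g)=[\xi,g_u]$, up to the $G$-conjugation ambiguity that Green functions are invariant under.''

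Types are taken up to $G$-conjugation, but neither ingredient of $R_{T_w}^G(1)(g)$ is a $G$-conjugation invariant:
\begin{itemize}
\item whether $g_s$ is $G^F$-conjugate into $T_w^F$ depends on the $F$-structure of $C_G(g_s)$, which $G$-conjugation does not preserve;
\item the value $Q^{C_G(g_s)}_{T_w}(g_u)$ depends on that $F$-structure and on the $C_G(g_s)^F$-class of $g_u$, since a geometric unipotent class of $\xi$ can split into rational classes on which Green functions need not agree.
\end{itemize}
Green functions are invariant under $G^F$-conjugation, not under $G$-conjugation.

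Concretely, take $\mathrm{GL}_2$. The type $[T,1]$ contains both the split and the elliptic regular semisimple classes. On split ones $R^G_{T_1}(1)=2$ and $R^G_{T_\sigma}(1)=0$; on elliptic ones it is the reverse. With $\delta^w_\xi$ defined by ``for some $g$'' one gets $\delta_T^1=\delta_T^\sigma=1$. So for $P=G$ the formula assigns $2$ instead of $1$ to each of the $(q-1)^2$ regular semisimple classes, and returns $2q(q-1)$ instead of the class number $q^2-1$.

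Your ``main obstacle'' argument rests on the same false step. From $\delta^w_\xi\neq 0$ you infer that every $g$ of that type has $g_s$ $G^F$-conjugate into $T_w^F$, which the elliptic classes violate; for $P=B$ they do not even meet $B^F$. The paper's proof makes the same error, in its claim that ``for some $g$'' is equivalent to ``for every $y$''. So following the paper does not repair this step.

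\textbf{The fix: index by rational types.} Use pairs $(C_G(s),u)$ with $s\in G^F$ semisimple and $u\in C_G(s)^F$ unipotent, taken modulo $G^F$-conjugacy. This is essentially Deriziotis' genus of $s$, refined by the $C_G(s)^F$-class of $u$, and the indexing set is still finite.
\begin{itemize}
\item $\delta$ becomes well defined, because $s\in Z(C_G(s))$ lies in every maximal torus of $C_G(s)$. This is the argument the paper intends, but it needs $G^F$-conjugacy of centralisers.
\item The Green function value becomes well defined.
\item Your obstacle becomes elementary. Suppose $g=su\in P^F$ with $s\in L^F$ (arranged by the preceding lemma), and $x\in G^F$ carries the pair of $g$ to that of $g'$. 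Then $x^{-1}g'x=s''u$ with $s''\in Z(C_G(s))^F$. This centre lies in any maximal torus of $L$ containing $s$, so $x^{-1}g'x\in P^F$. Hence either all classes of a rational type meet $P^F$ or none do.
\end{itemize}
Your appeal to Lang's theorem for moving $g_u$ into $C_G(g_s)\cap P$ would not work as stated. The variety of conjugates of that parabolic containing $g_u$ is not a homogeneous space; Sylow's theorem in $C_G(g_s)^F$ is the right tool there. With this finer indexing your PORC ingredients go through. You are also right, and the paper is not, that Green functions are polynomials in $q$ rather than independent of $q$.
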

 \begin{proof}We have
 \[
\begin{split}
k(P^F,G^F)&=\sum_{g\in \mathcal{R}(P^F,G^F)} \frac{1}{|W_L|}\sum_{w\in W_L}R_{T_w}^G(1_{T_w})(g)\\
&=\sum_{\substack{[\xi,u]\in \Xi_L(G) }}\sum_{g\in \tau_G^{-1}([\xi,u])} \frac{1}{|W_L|}\sum_{w\in W_L}R_{T_w}^G(1_{T_w})(g)
\\
&
=\sum_{\substack{[\xi,u]\in \Xi_L(G) }} \frac{|\tau_G^{-1}([\xi,u])|}{|W_L|}\sum_{w\in W_L}\delta_\xi^w Q_{T_w}^\xi(u) |W_G(T_w)^F/W_{\xi}(T_w)^F|.
\end{split}
\] 
Note that if $g_s$ is not in $T_w$ (up to $G^F$-conjugation), then $R_{T_w}^G(1)(g)=0$ (cf. \cite[Example 2.2.17 (a)]{GM20}). Furthermore,  $g_s\in T_w $ (up to conjugation) for some $g\in \tau_G^{-1}([\xi,u])$ if and only if $y_s\in T_w$ (up to $G^F$-conjugation) for every $y\in \tau_G^{-1}([\xi,u])$. This is because for any semisimple element $s \in G$, $C_G(s)$ contains every maximal torus containing $s$.

The PORC property follows from the PORC property of $|\tau^{-1}([\xi,u])|$ for each pseudo-Levi subgroup $\xi$. The PORC property comes from the facts that $\tau^{-1}([\xi,1])$ satisfies the PORC property (cf. \cite{Der}) and the number of unipotent conjugacy classes is independent on $q$ when $q$ is a power of a good prime for every $\xi$ (recall that there are only finitely many pseudo-Levi subgroups up to $G$-conjugation). In addition, the value of the Green function and the size $|W_\xi(T_w)^F|$ is independent on $q$ (for any pseudo-Levi subgroup $\xi$ of $ G$), cf. \cite{shoji} and \cite[Proposition 3.3.6]{Carter}.
 \end{proof}

 \section{Additive case result}\label{s:6}
We conclude this paper by presenting a result concerning the additive analogue problem.
 
\subsection{Types in $\mathcal{R}(\fp^F,\fg^F)$}
Let us consider an additive analogue of \S\ref{ss:representative-group} with the same notation.
\begin{lem}
The types of elements in $\mathcal{R}(\fp^F,\fg^F)$ are $[C_G(x),n]$, where $x\in \ft_w$ for some $w\in W_L$.
\end{lem}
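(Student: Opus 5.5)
The plan is to mirror the proof of the group-case lemma in \S\ref{ss:representative-group}, with the adjoint action in place of conjugation. Let $y\in\fg^F$ have adjoint orbit meeting $\fp^F$; replacing $y$ by a $G^F$-conjugate, assume $y\in\fp^F$. Since $y_s$ and $y_n$ are polynomials in $y$, both lie in $\fp^F$. The type of $y$ is $[C_G(y_s),y_n]$ up to $G$-conjugation, so it is enough to handle the semisimple part. The key claim is: \emph{for a semisimple $x\in\fp^F$ there exists $h\in U^F$ with $\Ad_h(x)\in\fl^F$.} Granting this, $\Ad_h(x)$ is an $F$-stable semisimple element of $\fl$. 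It therefore lies in $\mathrm{Lie}(S)$ for some $F$-stable maximal torus $S$ of $L$, namely a maximal torus of the connected reductive group $C_L(\Ad_h(x))$. Every $F$-stable maximal torus of $L$ is $L^F$-conjugate to some $T_w$ with $w\in W_L$ (since $F$ acts trivially on $W_L$). Hence $x\in\ft_w$ up to $G^F$-conjugation.

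To prove the claim, I first work over $k$. The element $x$ is semisimple in $\fp=\mathrm{Lie}(P)$, so $x$ lies in the Lie algebra of some maximal torus $T'$ of $P$; concretely, take a maximal torus of $C_P(x)^\circ$, using that $x\in\mathrm{Lie}(C_P(x))$ is semisimple. Maximal tori of $P$ are $P$-conjugate, so there is $p\in P$ with $pT'p^{-1}\subset L$. As in the group case, the $L$-component of $p$ can be absorbed, so we may take $p\in U$. Then $\Ad_p(x)\in\fl$. Write $x=x_l+x_u$ with $x_l\in\fl$ and $x_u\in\mathfrak{u}$; both are $F$-fixed because $\fl$ and $\mathfrak{u}$ are $F$-stable. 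The projection $\pi_\fp\colon\fp\to\fl$ is $P$-equivariant modulo $\mathfrak{u}$, and $U$ acts trivially on $\fp/\mathfrak{u}\cong\fl$. Therefore $\Ad_p(x)=\pi_\fp(\Ad_p(x))=x_l$. Applying $F$ gives $\Ad_{F(p)}(x)=x_l=\Ad_p(x)$, so $p^{-1}F(p)\in C_U(x)$.

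The main obstacle is the connectedness of $C_U(x)$, which is the additive analogue of \cite[\S18.3 Theorem]{Hum}. I would first try to reduce to the torus case: $x\in\mathrm{Lie}(T')$ is centralised by the torus $T'$, and with the very good characteristic assumption $C_G(x)$ is a Levi subgroup of $G$ (by \cite[Lemma 2.6.13]{letellier2004fourier}). Conjugating into $L$, $C_U(x)$ is the product of the root subgroups $U_\alpha\subset U$ with $d\alpha(x)=0$, which is a connected unipotent group. Alternatively, take a subtorus $S\subset T'$ with $C_G(S)=C_G(x)$, which exists for $q$ large since $C_G(x)$ is a Levi subgroup. Then $C_U(x)=C_U(S)$, and this is connected by the group result applied to tori. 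With connectedness established, Lang's theorem gives $v\in C_U(x)$ with $v^{-1}F(v)=p^{-1}F(p)$. Then $h:=pv^{-1}\in U^F$ and $\Ad_h(x)=\Ad_p(x)=x_l\in\fl^F$, which proves the claim and the lemma.
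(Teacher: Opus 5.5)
Your proof is correct and follows essentially the same route as the paper: conjugate by some $p\in U$ into $\fl$, show that the Frobenius discrepancy of $p$ lies in a connected centraliser in $U$, and apply Lang's theorem. The paper's bookkeeping is the conjugate version of yours: it works with $y=\Ad_p(x)$ and $F(p)p^{-1}\in C_U(y)$, gets $F(y)=y$ from $\fn\cap\fl=\{0\}$, and cites \cite[Lemma 2.6.5]{letellier2004fourier} for connectedness, whereas you use $C_U(x)$, $p^{-1}F(p)$ and the projection, and derive connectedness from $C_U(x)=C_U(Z(C_G(x))^\circ)$ with $C_G(x)$ a Levi subgroup (valid under the standing very-good-characteristic hypothesis).
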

Similarly, we denote the set of types that appear in $\mathcal{R}(P^F,G^F)$ by $\Xi_\fl(\fg)$.
\begin{proof}
It is enough to show that for a semisimple element $x\in\fp^F$, there exists $p\in P^F$ such that $Ad_p(x)\in \fl^F$.
Note that $Ad_u(x)-x\in\mathfrak{n}$ for any $u \in U$ and $x\in \mathfrak{l}$. This is because $lul^{-1}u^{-1}\in U$ for any $u \in U$ and $l \in L$. \footnote{More explicitly, let us take $\phi_u(l):=lul^{-1}u^{-1}$. Then from \cite[Proposition 2.1.1]{KNP}, we have 
$d\phi_u(y)=Ad_l(1-Ad_u(dl_l^{-1}y))\in \mathfrak{n}$ for any $y\in \fl$. This implies that $(1-Ad_u)(dl_l^{-1}y)\in \mathfrak{n}$, and so we get $Ad_u(x)-x\in \mathfrak{n}$ for any $u \in U$ and $x\in \fl$.}

Now,  as the group case, we can assume that 
 $y:=Ad_{p}(x)\in \fl$ for some $p\in U$, and we have
\[
F(y)=F(Ad_{p }(x))=Ad_{F(p )}(F(x))=Ad_{F(p)}(x)=Ad_{F(p)}Ad_{p^{-1}}(y)=Ad_{F(p)p^{-1}}(y)\in \fl.
\]
Then we have \[
F(y)-y=Ad_{F(p)p^{-1}}(y)-y\in \mathfrak{n}\cap \fl=\{0\}
\]
from the above discussion (since $F(p)p^{-1}\in U$). Then we have $F(y)=y$ and $Ad_{F(p)p^{-1}}(y)=y\Rightarrow F(p)p^{-1}\in C_U(y)$. From \cite[Lemma 2.6.5]{letellier2004fourier}, $C_U(y)$ is connected, and so we can apply Lang's theorem. Then for $v\in C_U(y)$ such that $v^{-1}F(v)=pF(p^{-1})$, we have that $ F(vp)=vp\in U^F$. This implies that 
\[
Ad_{vp}(x)=Ad_{v}(Ad_p(x))=Ad_v(y)=y,
\]
and so we get $x$ is in $\fl^F$ up to $P^F$-adjoint action.
\end{proof}

\subsection{Result}We can conclude the following:
  \begin{thm}\label{thm:result-additive}
 We have \[
k(\fp^F,G^F)=\sum_{\substack{[\xi,n]\in \Xi_\fl(\fg) }} \frac{|\tau_\fg^{-1}([\xi,n])|}{|W_L|}\sum_{w\in W_L}\delta_{\xi}^w Q_{T_w}^\xi(\omega(u)) |W_G(T_w)^F/W_{\xi}(T_w)^F|,\]
where $\delta_{\xi}^w=\begin{cases}1 \quad &\text{if } x_s\in \ft_w\ (\text{up to adjoint action}) \text{ for some }x \in \tau_\fg^{-1}([\xi,n])\\
0 &\text{otherwise}.
\end{cases}$
 \end{thm}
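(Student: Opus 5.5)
The plan mirrors the proof of Theorem \ref{thm:result-group}, starting from Corollary \ref{coro:additive-k}:
\[
k(\fp^F,G^F)= \sum_{x\in \mathcal{R}(\fp^F,\fg^F)}\frac{1}{|W_L|}\sum_{w\in W_L} \mathfrak{R}_{\ft_w}^\fg(1_{\ft_w})(x).
\]
By the preceding lemma, every $x\in\mathcal{R}(\fp^F,\fg^F)$ has type $[C_G(x_s),x_n]$ with $x_s\in\ft_w$ for some $w\in W_L$, so its type lies in $\Xi_\fl(\fg)$. Grouping the representatives according to $\tau_\fg$ therefore rewrites the outer sum as a sum over $[\xi,n]\in\Xi_\fl(\fg)$ of a sum over $x\in\tau_\fg^{-1}([\xi,n])$. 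I read $\tau_\fg^{-1}([\xi,n])$ here as counting adjoint-orbit representatives of the given type, in exact parallel with the group case.

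Next I evaluate $\mathfrak{R}_{\ft_w}^\fg(1_{\ft_w})(x)$ using the character formula \eqref{eq:DL-additive-formula}, applied with $\fg$ in place of $\fl$. That argument only used that centralisers of semisimple elements are connected, that \cite[Definition 3.2.13]{letellier2004fourier} holds, and the coset decomposition of $\{h\in G^F\mid Ad_h(x_s)\in\ft_w^F\}$. Two cases arise.

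\emph{Case $x_s$ is $G^F$-conjugate into $\ft_w$.} The formula gives
\[
\mathfrak{R}_{\ft_w}^\fg(1_{\ft_w})(x)=|W_G(T_w)^F/W_{C_G(x_s)}(T_w)^F|\,Q_{T_w}^{C_G(x_s)}(\omega(x_n)).
\]

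\emph{Case no $G^F$-conjugate of $x_s$ lies in $\ft_w$.} The defining sum is empty, so the value is $0$. This is the additive analogue of \cite[Example 2.2.17 (a)]{GM20}, and it produces the factor $\delta_\xi^w$.

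Both the Green function value and the Weyl-group index depend only on the $G$-conjugacy class of the pair $(C_G(x_s),x_n)$, that is, only on the type $[\xi,n]$. Hence the inner sum over $\tau_\fg^{-1}([\xi,n])$ collapses to $|\tau_\fg^{-1}([\xi,n])|$ times a single term.

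The main obstacle is showing that $\delta_\xi^w$ is well defined: if one $x$ of type $[\xi,n]$ has $x_s$ conjugate into $\ft_w$, then so does every $y$ of that type. I would argue as in the group case. $C_G(x_s)$ is a Levi subgroup containing every maximal torus whose Lie algebra contains $x_s$, by \cite[Proposition 2.6.4]{letellier2004fourier}. Also, $x_s$ is central in $\mathrm{Lie}(C_G(x_s))$, so $x_s$ lies in $\mathrm{Lie}(T')$ for every maximal torus $T'$ of $C_G(x_s)$. It follows that $x_s$ is conjugate into $\ft_w$ exactly when $C_G(x_s)$ contains a $G^F$-conjugate of $T_w$, which is a condition on $\xi$ and its rational structure alone.

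The delicate point is that different $F$-forms of the same $G$-conjugacy class of $\xi$ could behave differently. I expect to handle this as the paper does in the group case: either refine the type to remember the $G^F$-class of $C_G(x_s)$, or note that the conjugation is taken up to $G$ and absorbed by the identification of $T_w$ with the $w$-twist. Substituting everything back into Corollary \ref{coro:additive-k} then gives the stated formula.
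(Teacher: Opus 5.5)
Your route is the paper's: Corollary~\ref{coro:additive-k}, grouping the representatives by $\tau_\fg$, evaluating with Equation~\eqref{eq:DL-additive-formula}, vanishing when $x_s$ is not $G^F$-conjugate into $\ft_w$, and \cite[Proposition 2.6.4]{letellier2004fourier} to make $\delta_\xi^w$ well defined. The gap is the ``delicate point'' you leave open. For types taken up to $G$-conjugacy it is fatal, and the paper's one-line justification does not close it either. Your own observation shows that whether $x_s$ is conjugate into $\ft_w$ depends on the $G^F$-class of the $F$-stable group $C_G(x_s)$, and the geometric type forgets this.

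Concretely, let $G=GL_3$, $L=GL_2\times GL_1$, $W_L=\{1,(12)\}$. Take a regular semisimple $x_1\in\ft^F$ with three distinct eigenvalues in $\Fq$, and a regular semisimple $x_2\in\fl^F$ with one eigenvalue in $\Fq$ and two $\mathbb{F}_{q^2}$-conjugate ones. Both have a maximal torus as centraliser, hence the same type $[T,0]\in\Xi_\fl(\fg)$. But $x_1$ is $G^F$-conjugate into $\ft$ and not into $\ft_{(12)}$, and $x_2$ the other way round. So $\frac12\sum_{w\in W_L}\mathfrak{R}_{\ft_w}^\fg(1_{\ft_w})$ takes the values $3$ and $1$ at $x_1$ and $x_2$ (the numbers of rational planes they stabilise). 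The formula, with $\delta^1=\delta^{(12)}=1$, instead assigns $\frac12(6+2)=4$ to every element of this type. Thus ``for some $x$'' and ``for every $y$'' are not equivalent, and your second option (absorbing the discrepancy into the identification of $T_w$) cannot work. Only your first option yields a true statement: replace $\Xi_\fl(\fg)$ by rational types, i.e.\ $G^F$-orbits of pairs $(C_G(x_s),x_n)$.

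Two further steps need the same repair. First, your assertion that $Q^\xi_{T_w}(\omega(x_n))$ depends only on the geometric type is false. Green functions are functions on rational classes, and a geometric nilpotent orbit can split into rational orbits with different values. For the subregular orbit of $\mathfrak{sp}_4$, $q$ odd, the two rational forms lie in $3q+1$ and $q+1$ rational Borel subalgebras, so the Green function of the split torus differs.

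Second, Equation~\eqref{eq:DL-additive-formula}, which you take over from the paper, rests on a single-coset decomposition of $\{h\in G^F\mid \Ad_h(x_s)\in\ft_w^F\}$. This fails when several $C_G(x_s)^F$-classes of maximal tori of $C_G(x_s)$ are $G^F$-conjugate to $T_w$. For $G=GL_4$, take $x_s=\mathrm{diag}(a,a,b,b)$ with $a\neq b$ in $\Fq$, $w=(12)$, and $T_w\subset C_G(x_s)$ nonsplit in the first $GL_2$-block and split in the second. Then this set is $C_G(x_s)^F\sqcup h_0C_G(x_s)^F$ with $h_0$ the block swap, although $|W_G(T_w)^F/W_{C_G(x_s)}(T_w)^F|=1$. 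In general $\mathfrak{R}_{\ft_w}^\fg(1_{\ft_w})(x)$ is the sum of $|W_G(T')^F/W_{C_G(x_s)}(T')^F|\,Q^{C_G(x_s)}_{T'}(\omega(x_n))$ over the $C_G(x_s)^F$-classes of $F$-stable maximal tori $T'\subset C_G(x_s)$ that are $G^F$-conjugate to $T_w$. With rational types and this corrected value, your grouping argument goes through.
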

  
\begin{proof}
Recall that we have $$k(\fp^F,G^F)= \sum_{x\in \mathcal{R}(\fp^F,\fg^F)}\frac{1}{|W_L|}\sum_{w\in W_L} \mathfrak{R}_{\ft_w}^\fg(1_{\ft_w})(x)$$ from Corollary \ref{coro:additive-k}.

Note that if $x_s$ is not in $\ft_w$ (up to adjoint action), then $\mathfrak{R}_{\ft_w}^\fg(1)(x)=0$ (cf. \cite[Definition 3.2.13]{letellier2004fourier}). Furthermore,  $x_s\in \ft_w $ (up to adjoint action) for some $x\in \tau_\fg^{-1}([\xi,n])$ if and only if $y_s\in \ft_w$ (up to the adjoint action) for every $y\in \tau_\fg^{-1}([\xi,n])$. This is because for any semisimple element $s \in \fg$, $C_\fg(s)$ contains every Lie algebra of maximal torus containing $s$, cf. \cite[Proposition 2.6.4]{letellier2004fourier}. 
Then with Equation \eqref{eq:DL-additive-formula}, we can conclude this theorem.
\end{proof}

\begin{rem}
The PORC property of $k(\fp^F,G^F)$ depends on the PORC property of $|\tau_\fg^{-1}([\xi,n])|$, and to the best of our knowledge, this remains unknown. This problem is also posed in \cite[Question 4.2.7]{NP}. However, when the characteristic is regular as defined in \cite{lehrer1992rational}, the size $|\tau_\fg^{-1}([\xi,1])|$ is known in \cite[Theorem 4.11]{lehrer1992rational}.
\end{rem}

\begin{thm}\label{thm:GR-additive-analogue}
We have 
\[
 k(\mathfrak{n}^F,G^F)=\frac{ |L^F|_{p'}}{|W_L|}\sum_{w\in W_L} { (-1)^{\epsilon_G\epsilon_L}  } \sum_{n\in\mathcal{R}(\mathfrak{n}^F,\fg^F)} Q_{T_w}^G(\omega(n)).
\]
\end{thm}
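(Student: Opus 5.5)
The plan is to start from Corollary \ref{coro:nilpotent-formula}, which already expresses
\[
k(\mathfrak{n}^F,G^F)=\sum_{x\in\mathcal{R}(\mathfrak{n}^F,\fg^F)}\frac{|L^F|_{p'}}{|W_L|}\sum_{w\in W_L}\frac{ (-1)^{\epsilon_G\epsilon_L}  }{|\ft_w^F|}\sum_{y\in \ft_w^F} \mathfrak{R}_{\ft_w}^{\fg}( \mathcal{F}_{\ft_w}(\delta_y))(x),
\]
and to evaluate the innermost sum at each representative $x$. The first observation is that every $x\in\mathcal{R}(\mathfrak{n}^F,\fg^F)$ is nilpotent. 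Indeed, $x$ is $G^F$-conjugate to an element of $\fn^F=\mathrm{Lie}(U)^F$, and $\fn$ consists of nilpotent elements. Hence $x_s=0$ and $x=x_n$ for all representatives, and we may write $n$ for them as in the statement.

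Next I will pull the sum over $y$ inside the induction using linearity of $\mathfrak{R}_{\ft_w}^\fg$, exactly as in Step 4 of Proposition \ref{prop:f_n^g}. This gives $\sum_{y}\mathfrak{R}_{\ft_w}^\fg(\mathcal{F}_{\ft_w}(\delta_y))=\mathfrak{R}_{\ft_w}^\fg(\chi_{\ft_w})$ by Equation \eqref{eq:step3}. It then remains to compute $\mathfrak{R}_{\ft_w}^\fg(\chi_{\ft_w})(n)$ for nilpotent $n$.

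For this I will use the character formula for Deligne--Lusztig induction on Lie algebras, \cite[Definition 3.2.13]{letellier2004fourier}, in the form already invoked in Equation \eqref{eq:DL-additive-formula}. For a function $f$ on $\ft_w^F$ and $x=x_s+x_n$, the value $\mathfrak{R}_{\ft_w}^\fg(f)(x)$ is a sum over $h\in G^F$ with $\Ad_{h}(x_s)\in\ft_w^F$, weighted by $f(\Ad_h(x_s))$, times the Green function $Q_{T_w}^{C_G(x_s)}(\omega(x_n))$, normalised by $|C_G(x_s)^F|^{-1}$. When $x_s=0$ we have $C_G(x_s)=G$, every $h\in G^F$ contributes, and the normalisation cancels the count. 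So
\[
\mathfrak{R}_{\ft_w}^\fg(f)(n)=f(0)\,Q_{T_w}^G(\omega(n)).
\]
This is the additive analogue of \cite[Theorem 2.2.16]{GM20} at a unipotent element. With $f=\chi_{\ft_w}$ and $\chi_{\ft_w}(0)=|\ft_w^F|$ we obtain $|\ft_w^F|\,Q_{T_w}^G(\omega(n))$. The factor $|\ft_w^F|$ cancels the $1/|\ft_w^F|$ in the corollary, and interchanging the finite sums over $w$ and $n$ yields the stated formula.

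The main obstacle is justifying the value $\mathfrak{R}_{\ft_w}^\fg(f)(n)=f(0)Q_{T_w}^G(\omega(n))$ with the correct normalisations. I expect to rely on Letellier's definition of Green functions on $\fg_{nil}^F$ together with the $G$-equivariant isomorphism $\omega$, which identifies them with the group Green functions under our assumptions on $p$ and $q$. I also need to check that the Fourier-transform normalisation in Step 3 gives $\chi_{\ft_w}(0)=|\ft_w^F|$ with no extra scalar. If that normalisation carries a factor such as $|\ft_w^F|^{-1/2}$, I will absorb it consistently into the corollary before cancelling.
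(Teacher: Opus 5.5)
Your proposal is correct and follows the paper's proof. Both start from Corollary~\ref{coro:nilpotent-formula}, use that every representative $n$ is nilpotent so that $\mathfrak{R}_{\ft_w}^\fg(f)(n)=f(0)\,Q_{T_w}^G(\omega(n))$, and use Equation~\eqref{eq:sum-regular-additive} to produce the factor $|\ft_w^F|$ that cancels; the only cosmetic difference is that the paper evaluates each $\mathfrak{R}_{\ft_w}^\fg(\mathcal{F}_{\ft_w}(\delta_y))(n)$ separately before summing over $y$, whereas you first sum over $y$ by linearity to get $\mathfrak{R}_{\ft_w}^\fg(\chi_{\ft_w})$.
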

This formula has the same form as the formula for the group case \cite[Proposition 4.3]{goodwin2009rational}, and so this satisfies the PORC property.
\begin{proof}
Recall Corollary \ref{coro:nilpotent-formula}, i.e.,
  \[
  k(\mathfrak{n}^F,G^F)=\sum_{n\in\mathcal{R}(\mathfrak{n}^F,\fg^F)}\frac{ |L^F|_{p'}}{|W_L|}\sum_{w\in W_L}\frac{ (-1)^{\epsilon_G\epsilon_L}  }{|\ft_w^F|}\sum_{y\in \ft_w^F} \mathfrak{R}_{\ft_w}^{\fg}( \mathcal{F}_{\ft_w}(\delta_y))(n).
  \]
Then for a nilpotent element $n$, we have \[
\begin{split}\sum_{y\in \ft_w^F} \mathfrak{R}_{\ft_w}^{\fg}( \mathcal{F}_{\ft_w}(\delta_y))(n)&=\sum_{y\in \ft_w^F} Q_{T_w}^G(\omega(n)) \mathcal{F}_{\ft_w}(\delta_y)(0)\\
&=Q_{T_w}^G(\omega(n))\sum_{y\in \ft_w^F}  \mathcal{F}_{\ft_w}(\delta_y)(0)\\
&=Q_{T_w}^G(\omega(n))|\ft_w^F|
\end{split}\]
from Equation \eqref{eq:sum-regular-additive}.
Therefore, we get
\[
  k(\mathfrak{n}^F,G^F)=\frac{ |L^F|_{p'}}{|W_L|}\sum_{w\in W_L} { (-1)^{\epsilon_G\epsilon_L}  } \sum_{n\in\mathcal{R}(\mathfrak{n}^F,\fg^F)} Q_{T_w}^G(\omega(n)).
\]
This finishes the proof.
\end{proof}

\bigskip
\noindent \textbf{Acknowledgments}		
GyeongHyeon Nam was supported by Oscar Kivinen's Väisälä project grant of the Finnish Academy of Science and Letters.

\end{document}